\newtheorem{Thm}{Theorem}[section]
\newtheorem{Lemma}[Thm]{Lemma}
\newtheorem{Prop}[Thm]{Proposition}
\theoremstyle{definition}
\newtheorem{Example}[Thm]{Example}
\newtheorem{Rmk}[Thm]{Remark}
\numberwithin{equation}{section}
\def\frakB{\mathfrak{B}}
\def\frakG{\mathfrak{G}}
\def\bbz{\mathbb{Z}}
\def\bbc{\mathbb{C}}
\def\bbr{\mathbb{R}}
\def\bbz{\mathbb{Z}}
\def\lra{\longrightarrow}
\def\bs{\bar{s}}
\def\aut{\mathrm{Aut}}
\def\aff{\mathrm{Aff}}
\def\Nil{\mathrm{Nil}}
\def\GL{\mathrm{GL}}
\def\e{\epsilon}
\def\Del{\Delta}
\def\boxit#1{\vbox{\hrule\hbox{\vrule\kern3pt
     \vbox{\kern3pt#1\kern3pt}\kern3pt\vrule}\hrule}}
\begin{document}
\title[Topology of iterated $S^1$-bundles]
{Topology of iterated $S^1$-bundles}

\author[Jong Bum Lee and Mikiya Masuda]{Jong Bum Lee and Mikiya Masuda}
\address{Department of Mathematics, Sogang University, Seoul 121-742, KOREA}
\email{jlee@sogang.ac.kr}

\address{Department of Mathematics, Osaka City University, Sumiyoshi-ku,
Osaka 558-8585, JAPAN}
\email{masuda@sci.osaka-cu.ac.jp}

\thanks{The first-named author was partially supported by Mid-career Researcher Program through NRF grant funded by the MEST (No. 2010-0008640) and by the Sogang University Research Grant of 2010. The second-named author was partially supported by Grant-in-Aid for Scientific Research  22540094.}

\keywords{iterated $S^1$-bundle, infra-nilmanifold, real Bott tower, real Bott manifold, flat Riemannian manifold, Bieberbach group, virtually nilpotent group}
\subjclass[2000]{Primary 57R22; Secondary 57N65, 22E25}

\date{\today}

\maketitle


\begin{abstract}
In this paper we investigate what kind of manifolds arise as the total spaces of iterated $S^1$-bundles.  A real Bott tower studied in \cite{CMO}, \cite{KM} and \cite{KN} is an example of an iterated $S^1$-bundle.  We show that the total space of an iterated $S^1$-bundle is homeomorphic to an infra-nilmanifold.  A real Bott manifold, which is the total space of a real Bott tower, provides an example of a closed flat Riemannian manifold.  We also show that real Bott manifolds are the only closed flat Riemannian manifolds obtained from iterated $\bbr{P}^1$-bundles.  Finally we classify the homeomorphism types of the total spaces of iterated $S^1$-bundles in dimension 3.
\end{abstract}

\section{Introduction}

In this paper, an $S^1$-bundle is a fiber bundle with the circle $S^1$ as a fiber and an \emph{iterated $S^1$-bundle} of height $n$ is a sequence of  smooth $S^1$-bundles starting with a point:
\begin{equation}  \label{iterated}
X_n\buildrel{}\over\lra X_{n-1}\buildrel{}\over\lra \cdots \buildrel{}\over\lra X_{1}\buildrel{}\over\lra X_{0}=\{\text{a point}\}.
\end{equation}
The total space $X_n$ of an iterated $S^1$-bundle is a closed aspherical manifold of dimension $n$.  Our concern is what kind of aspherical manifolds arise as the total space $X_n$.
If all the $S^1$-bundles $X_i\to X_{i-1}$ in \eqref{iterated} are principal, then one sees that the fundamental group of $X_n$ is nilpotent and hence $X_n$ is homeomorphic to a nilmanifold, and conversely any closed nilmanifold arises as the total space of an iterated \emph{principal} $S^1$-bundle (see \cite[Proposition 3.1]{FaHu}).  Our first main result is the following.

\begin{Thm} \label{main theo1}
The total space $X_n$ of an iterated $S^1$-bundle of height $n$ is homeomorphic to an infra-nilmanifold.  In fact, some $2^{n-1}$-cover of  $X_n$ is homeomorphic to a nilmanifold.
\end{Thm}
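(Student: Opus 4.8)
The plan is to reduce the statement to a purely group-theoretic claim about $\Gamma_n := \pi_1(X_n)$ together with the topological rigidity of infra-nilmanifolds. First I would check by induction on the height that every $X_i$ is aspherical: the base case is $X_1=S^1$, and an $S^1$-bundle over an aspherical base $X_{i-1}$ has $\pi_2(X_{i-1})=0$, so the homotopy long exact sequence both forces $\pi_k(X_i)=0$ for $k\geq2$ and produces a short exact sequence
\[
1\lra \bbz\lra \Gamma_i\lra \Gamma_{i-1}\lra 1
\]
with $\bbz=\pi_1(S^1)$ normal in $\Gamma_i$. Thus $X_n$ is a closed aspherical manifold, a $K(\Gamma_n,1)$; in particular $\Gamma_n$ is torsion-free and, being an iterated extension by $\bbz$, is poly-$\bbz$. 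Writing $K_i=\ker(\Gamma_n\to\Gamma_i)$, we get a filtration $1=K_n\subseteq\cdots\subseteq K_0=\Gamma_n$ with $K_{i-1}/K_i\cong\bbz$, and since $\bbz$ is abelian the conjugation action of $\Gamma_n$ on $K_{i-1}/K_i$ factors through $\Gamma_{i-1}$ as the monodromy of the $i$-th bundle, hence through $\Aut(\bbz)=\{\pm1\}$. This defines an orientation character $\epsilon_i\colon\Gamma_{i-1}\to\{\pm1\}$ for $i=2,\dots,n$ (the bottom bundle $X_1\to X_0$ is automatically oriented, which is the source of the exponent $n-1$).

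The heart of the argument is the index count. Pulling the $\epsilon_i$ back along $\Gamma_n\twoheadrightarrow\Gamma_{i-1}$ and assembling them gives a homomorphism $\epsilon\colon\Gamma_n\to\{\pm1\}^{n-1}$; set $\Gamma_n'=\ker\epsilon$, so $[\Gamma_n:\Gamma_n']$ divides $2^{n-1}$. I claim $\Gamma_n'$ is nilpotent. Indeed, put $K_i'=K_i\cap\Gamma_n'$. For $g\in\Gamma_n'$ and $x\in K_{i-1}'$, the image of $g$ in $\Gamma_{i-1}$ lies in $\ker\epsilon_i$, so $g$ acts trivially on $K_{i-1}/K_i$ and $gxg\inv x\inv\in K_i$; as this commutator also lies in $\Gamma_n'$, we get $[g,x]\in K_i\cap\Gamma_n'=K_i'$. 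Hence $[\Gamma_n',K_{i-1}']\subseteq K_i'$, so $1=K_n'\subseteq\cdots\subseteq K_0'=\Gamma_n'$ is a central series and $\Gamma_n'$ is finitely generated torsion-free nilpotent.

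Finally I would invoke the structure theory and rigidity of (infra-)nilmanifolds. By Malcev's theorem $\Gamma_n'$ is a cocompact lattice in a unique simply connected nilpotent Lie group $N$, so $N/\Gamma_n'$ is a closed nilmanifold and a $K(\Gamma_n',1)$. Since $\Gamma_n$ is torsion-free, virtually nilpotent, and contains $\Gamma_n'$ with finite index, it can be realized as an almost-Bieberbach group acting properly and freely on $N$ through $N\rtimes C$ with $C$ a finite subgroup of $\Aut(N)$, and then $N/\Gamma_n$ is an infra-nilmanifold modelled on $N$ with fundamental group $\Gamma_n$. Both $X_n$ and $N/\Gamma_n$ are closed aspherical manifolds with fundamental group $\Gamma_n$, hence homotopy equivalent; the desired homeomorphism then follows from the topological rigidity of infra-nilmanifolds (Farrell–Hsiang, i.e.\ the Borel conjecture for virtually nilpotent groups via Farrell–Jones). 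For the second assertion, the cover of $X_n$ corresponding to $\Gamma_n'$ is homotopy equivalent to the nilmanifold $N/\Gamma_n'$ and, by the same rigidity, homeomorphic to it; its degree divides $2^{n-1}$, and since any finite cover of a nilmanifold is again a nilmanifold one arranges a cover of degree exactly $2^{n-1}$.

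I expect the genuine obstacle to lie in this last step rather than in the algebra: the nilpotency and the index $2^{n-1}$ are elementary once the central series above is in hand, but passing from a homotopy equivalence to a homeomorphism rests on deep topological rigidity theorems, and the low-dimensional cases ($\dim=3,4$) require separate justification (geometrization in dimension $3$ and Freedman–Quinn surgery for the good groups arising here in dimension $4$).
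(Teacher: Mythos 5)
Your proposal is correct, and its topological endgame coincides with the paper's: both realize $\pi_1(X_n)$ as an almost-Bieberbach group via the Seifert construction (the paper cites Kamishima--Lee--Raymond), compare the two aspherical manifolds, and conclude by Farrell--Hsiang rigidity, with dimensions $3$ and $4$ handled separately (the paper uses Waldhausen--Heil for Seifert manifolds in dimension $3$ and Farrell--Jones in dimension $4$, where you invoke geometrization and Freedman--Quinn surgery for good groups --- interchangeable justifications here, since virtually nilpotent groups have subexponential growth). Where you genuinely diverge is in producing the finite-index nilpotent normal subgroup. The paper works from an explicit presentation (its Lemmas~\ref{pi} and~\ref{presentation}) and verifies by induction, with commutative diagrams of extensions, that $\Gamma_n=\langle s_1^2,\dots,s_n^2\rangle$ is normal of index $2^n$ with an explicit central series, and then that $\Lambda_n=\langle s_1^2,\dots,s_{n-1}^2,s_n\rangle$ is nilpotent normal of index exactly $2^{n-1}$. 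You instead argue presentation-free: the monodromy of the $i$-th bundle gives a character $\epsilon_i\colon\pi_1(X_{i-1})\to\{\pm1\}$ (your verification that the conjugation action on $K_{i-1}/K_i$ factors through $\Gamma_{i-1}$ is sound, since $K_{i-1}$ acts trivially on the abelian quotient), and the kernel of the assembled map $\epsilon\colon\Gamma_n\to\{\pm1\}^{n-1}$ is nilpotent because $[\ker\epsilon, K_{i-1}\cap\ker\epsilon]\subseteq K_i\cap\ker\epsilon$ gives a central series. This is cleaner: normality is automatic (kernel of a homomorphism), and the exponent $n-1$ is explained conceptually by the bottom bundle having trivial monodromy; note also that your $\ker\epsilon$ contains the paper's $\Lambda_n$, since every $s_i^2$ and $s_n$ lie in it. The trade-offs: your subgroup only has index \emph{dividing} $2^{n-1}$, so you need the final adjustment passing to a further finite-index subgroup to get a cover of degree exactly $2^{n-1}$ --- which you correctly supply, using that finite-index subgroups of finitely generated torsion-free nilpotent groups are again such --- whereas the paper's explicit $\Gamma_n$ and $\Lambda_n$ pay for their bookkeeping by being reused later (the flatness criterion of its Lemma~\ref{Bieberbach}, $\Gamma_n\cong\bbz^n$, and the height-$3$ classification both depend on these concrete generators).
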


The projectivization of a plane bundle, called an $\bbr{P}^1$-bundle, is an $S^1$-bundle, so an iterated $\bbr{P}^1$-bundle is an iterated $S^1$-bundle. The total spaces of iterated $\bbr{P}^1$-bundles are somewhat special.  For instance, the first Betti number $b_1(X_n;\bbz_2)$ of the total space $X_n$ in \eqref{iterated} with $\bbz_2$-coefficient, where $\bbz_2=\bbz/2\bbz$, is at most $n$ and it turns out that $X_n$ is the total space of an iterated $\bbr{P}^1$-bundle if and only if $b_1(X_n;\bbz_2)=n$.

When every plane bundle used to projectivise in the iterated $\bbr{P}^1$-bundle is a Whitney sum of two line bundles, the iterated $\bbr{P}^1$-bundle is called a \emph{real Bott tower} and the total space $X_n$ is called a \emph{real Bott manifold}.  A real Bott manifold provides an example of a flat Riemannian manifold and the diffeomorphism classification of real Bott manifolds has been completed in \cite{CMO}, see also \cite{KM} and \cite{KN}.
Unless every plane bundle used to projectivise in the iterated $\bbr{P}^1$-bundle is a Whitney sum of two line bundles, the total space $X_n$ is not necessarily flat Riemannian.  However, we may ask whether more flat Riemannian manifolds than real Bott manifolds can be produced from iterated $\bbr{P}^1$-bundles. The following theorem says that the answer is no.

\begin{Thm} \label{main theo2}
If the total space of an iterated $\bbr{P}^1$-bundle is homeomorphic to a closed flat Riemannian manifold, then it is homeomorphic to a real Bott manifold.
\end{Thm}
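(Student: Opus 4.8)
The plan is to translate the statement into fundamental groups and then into a statement about the holonomy representation. Since by Theorem \ref{main theo1} the total space $X_n$ is an infra-nilmanifold, and closed infra-nilmanifolds (in particular flat manifolds and real Bott manifolds) are determined up to homeomorphism --- indeed up to affine diffeomorphism --- by their fundamental groups, it suffices to prove the purely group-theoretic assertion: if $\Gamma=\pi_1(X_n)$ is isomorphic to a Bieberbach group, then $\Gamma$ is isomorphic to the fundamental group of a real Bott manifold. Assuming $X_n$ flat makes $\Gamma$ a Bieberbach group, so there is a short exact sequence $1\to\bbz^n\to\Gamma\to H\to1$ with $H$ the finite holonomy group and holonomy representation $\rho\colon H\to\GL_n(\bbz)$. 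On the other hand, because $X_n$ is the total space of an iterated $\bbr{P}^1$-bundle, the fact recalled in the introduction gives $b_1(X_n;\bbz_2)=\dim_{\bbz_2}H^1(\Gamma;\bbz_2)=n$, the maximal possible value.

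The key step, which I expect to be the main obstacle, is to deduce from the maximality $b_1(X_n;\bbz_2)=n$ that the holonomy is trivial modulo $2$, i.e. $\rho(h)\equiv I\pmod 2$ for every $h\in H$. I would run the Lyndon--Hochschild--Serre spectral sequence of $1\to\bbz^n\to\Gamma\to H\to1$ with $\bbz_2$-coefficients; its five-term exact sequence exhibits $H^1(\Gamma;\bbz_2)$ as an extension of $\ker d_2$ by $H^1(H;\bbz_2)$, where $d_2\colon H^1(\bbz^n;\bbz_2)^H\to H^2(H;\bbz_2)$ is cap product with the extension class of $\Gamma$. This already gives $b_1(\Gamma;\bbz_2)\le \dim_{\bbz_2}H^1(H;\bbz_2)+\dim_{\bbz_2}H^1(\bbz^n;\bbz_2)^H$. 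The heart of the matter is that the torsion-freeness of the Bieberbach group $\Gamma$ obstructs equality once the reduction $\rho\bmod 2$ is nontrivial: a holonomy element acting nontrivially on $\bbz^n\otimes\bbz_2$ shrinks the invariant space $H^1(\bbz^n;\bbz_2)^H$, while a half-translation attached to an involution (forced by torsion-freeness) makes the corresponding transgression $d_2$ nonzero; either phenomenon strictly lowers $b_1(\Gamma;\bbz_2)$ below $n$. Hence maximality forces $\rho(H)\subset I+2M_n(\bbz)$.

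Once $\rho(H)$ lies in the principal congruence subgroup of level $2$, the rest is linear algebra. A finite-order matrix $M=I+2K$ satisfies $M^2\equiv I\pmod 4$, and a short $2$-adic computation shows that $M$ can only have order $1$ or $2$; thus $H$ is an elementary abelian $2$-group and each $\rho(h)=I-2e_h$ with $e_h^2=e_h$ an integral idempotent. An integral idempotent splits $\bbz^n$ as the direct sum of its image and its kernel, so $\rho(h)$ is $\GL_n(\bbz)$-conjugate to a diagonal $\pm1$ matrix; since the $e_h$ commute, these splittings can be refined simultaneously and all of $\rho(H)$ is simultaneously conjugated into the group $D$ of diagonal $\pm1$ matrices. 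Therefore, after an integral change of basis, $\Gamma\subset\bbr^n\rtimes D$.

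Finally I would normalize the translation parts. With diagonal $\pm1$ linear parts, the usual Bieberbach normalization places each generator in the form $x\mapsto \rho(g)x+\tfrac12 v_g$, with $v_g$ supported on coordinates fixed by $\rho(g)$ and torsion-freeness forcing the appropriate half-translations; recording, for each generator, which coordinates it negates produces a strictly upper-triangular matrix over $\bbz_2$ once the coordinates are ordered by the flag coming from the tower \eqref{iterated}. This is exactly a Bott matrix $A$, and reading off the resulting presentation identifies $\Gamma$ with the fundamental group of the real Bott manifold $M(A)$. Hence $X_n$ is homeomorphic to $M(A)$, as desired.
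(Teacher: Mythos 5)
Your reduction to group theory is fine, and your $2$-adic endgame is locally correct (torsion in the kernel of $\GL_n(\bbz)\to\GL_n(\bbz_2)$ has order at most $2$; $\rho(h)=I-2e_h$ with $e_h$ an integral idempotent; commuting idempotents split $\bbz^n$ simultaneously). But the step you yourself flag as the heart of the matter --- that $b_1(\Gamma;\bbz_2)=n$ forces $\rho(h)\equiv I \pmod 2$ --- is asserted, not proved, and it is genuinely delicate. The five-term sequence (or the direct abelianization count) gives only $b_1\le \dim H^1(H;\bbz_2)+\dim (\bbz_2^n)_{\bar\rho(H)}$, and this bound does \emph{not} drop below $n$ merely because $\bar\rho\neq 1$: take holonomy $\bbz_2$ acting by one permutation (swap) block plus trivial blocks; the coinvariants have dimension $n-1$ and $H^1(H;\bbz_2)$ contributes $1$, so the count allows $b_1=n$, and ruling it out requires showing that torsion-freeness together with maximality of the translation lattice forces the class of $g^2$ (your transgression $d_2$) to be odd in a fixed coordinate. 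For holonomy groups with several generators mixing sign and permutation blocks, this bookkeeping --- in particular that the "shrinking of invariants" and the "nonzero transgression" caused by the \emph{same} generator cannot compensate each other --- is the real content of the theorem, and your proposal gives no argument for it.

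The final normalization step has a second gap. From "$\Gamma\subset\bbr^n\rtimes D$ with $D$ the diagonal $\pm1$ group" it does \emph{not} follow that generators can be put in the form $x\mapsto \rho(g)x+\tfrac{1}{2}v_g$ with $v_g$ supported on coordinates fixed by $\rho(g)$: the Hantzsche--Wendt group lies in $\bbr^3\rtimes D$, is torsion-free, and is not a real Bott group --- its translation parts cannot be simultaneously pushed onto fixed coordinates. So at this stage you must reinvoke $b_1=n$ (diagonality alone is insufficient) and actually prove the normalization; moreover, the "flag coming from the tower" is no longer visible in the coordinates produced by your diagonalizing conjugation, so the strict upper-triangularity of the resulting Bott matrix needs a separate argument. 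By contrast, the paper never passes to the holonomy representation at all: it inducts on the tower using the triangular presentation of Lemma~\ref{presentation}, the evenness of all exponents for $\bbr{P}^1$-bundles (Proposition~\ref{char of RP}), the commutation criterion from Lemma~\ref{Bieberbach} as packaged in Lemmas~\ref{lemm:1}--\ref{lemm:3}, and changes of lifts $s_i\mapsto s_n^{b_i}s_i$ to kill the exponents $a_{ij}$ one stage at a time, arriving directly at the real Bott presentation. Your route is plausible and could likely be completed, but both the mod-$2$ triviality lemma and the normalization lemma are substantial missing proofs, not details.
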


The total space $X_2$ of an iterated $S^1$-bundle of height 2 is either the torus $(S^1)^2$ or the Klein bottle.  However, the total spaces $X_3$ of iterated $S^1$-bundles of height 3 are abundant and we classify them up to homeomorphism.  It turns out that there are six flat Riemannian manifolds, an infinite family of nilmanifolds and an infinite family of infra-nilmanifolds, see Theorem~\ref{3-dim class} for details.  It is known that there are ten homeomorphism classes of closed flat Riemannian manifolds in dimension 3 and our result shows that six of them arise from iterated $S^1$-bundles while four of them arise from iterated $\bbr{P}^1$-bundles.  In a forthcoming paper, we will classify 4-dimensional closed flat Riemannian manifolds arising from iterated $S^1$-bundles.  It is known in \cite{BBNWZ} that there are 74 homeomorphism classes of closed flat Riemannian manifolds in dimension 4 and it turns out that 35 of them arise from iterated $S^1$-bundles while 12 of them arise from iterated $\bbr{P}^1$-bundles.

This paper is organized as follows.  We study fundamental groups of $S^1$-bundles in Section~\ref{sect2} and of iterated $S^1$-bundles in Section~\ref{sect3}.  In Section~\ref{sect4} we prove that the total space of an iterated $S^1$-bundle of height $n$ contains a nilpotent normal subgroup of index $2^{n-1}$ in its fundamental group, which implies Theorem~\ref{main theo1}.   In Section~\ref{sect5} we classify isomorphism classes of possible fundamental groups of the total spaces of iterated $S^1$-bundles of height 3 and then show that those isomorphism classes can be realized by iterated $S^1$-bundles of height 3. Section~\ref{sect6} is devoted to the proof of Theorem~\ref{main theo2}.

\section{$S^1$-bundles} \label{sect2}

When $\xi$ is a plane bundle with an Euclidean fiber metric, the unit circle bundle $S(\xi)$ of $\xi$ is an $S^1$-bundle. Conversely, if the base space $B$ is a closed smooth manifold, then any $S^1$-bundle over $B$ can be regarded as the unit circle bundle of some plane bundle with an Euclidean fiber metric because the inclusion map $\mathrm{O}(2)\to\mathrm{Diff}(S^1)$ is known to be homotopy equivalent so that the structure group of the circle bundle, that is $\mathrm{Diff}(S^1)$, reduces to $\mathrm{O}(2)$.  This also shows that a smooth $S^1$-bundle over a closed smooth manifold is isomorphic to a principal $S^1$-bundle if and only if the $S^1$-bundle is orientable (see \cite[Proposition 6.15]{Mo} for a direct proof).

The projectivization $P(\eta)$ of a plane bundle $\eta$, called the \emph{$\bbr{P}^1$-bundle}, is also an $S^1$-bundle and fiber-wisely double covered by $S(\eta)$.  If $\eta$ is orientable, then $\eta$ admits a complex structure so that one can form its 2-fold tensor product $\eta\otimes_\bbc \eta$ over the complex numbers $\bbc$ and then $P(\eta)=S(\eta\otimes_\bbc \eta)$.

\begin{Lemma}\label{pi}
Let $S^1\buildrel{i}\over\to X\buildrel\pi\over\to B$ be an $S^1$-bundle over an arcwise connected space $B$ and let $\pi_1(B)$ be finitely presented as follows
$$
\langle x_1,\cdots,x_p\ |\ f_j(x_1,\cdots,x_p)=1\ (1\le j\le q)\rangle
$$
{and let $i_*:\pi_1(S^1)\to\pi_1(X)$ be injective}. Then $\pi_1(X)$ has a presentation of the form
$$
\langle x_1,\cdots,x_p,\sigma\ |\ x_i{\sigma}x_i^{-1}={\sigma}^{\pm1}, f_j(x_1,\cdots,x_p)=\sigma^{a_j}\ (1\le i\le p,1\le j\le q)\rangle
$$
for some integers $a_j$.

Moreover, the following are equivalent:
\begin{enumerate}
\item[$(1)$] the $S^1$-bundle $X\to B$ is fiber-wisely double covered by another $S^1$-bundle,
\item[$(2)$] every integer $a_j$ above is even,
\item[$(3)$] $b_1(X,\bbz_2)=b_1(B,\bbz_2)+1$ where $b_1(\ ,\bbz_2)$ denotes the first Betti number with $\bbz_2$-coefficient.
\end{enumerate}
\end{Lemma}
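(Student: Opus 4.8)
The plan is to construct the presentation from the homotopy exact sequence and then read the three equivalent conditions off $H_1(X;\bbz_2)$. First I would invoke the long exact homotopy sequence of the fibration $S^1\to X\to B$; since $i_*$ is injective, the connecting map $\pi_2(B)\to\pi_1(S^1)$ is zero, so we obtain a short exact sequence $1\to\langle\sigma\rangle\to\pi_1(X)\to\pi_1(B)\to1$ with $\langle\sigma\rangle\cong\bbz$ the image of $\pi_1(S^1)$. Conjugation of $\pi_1(X)$ on the normal subgroup $\langle\sigma\rangle$ gives a homomorphism to $\Aut(\bbz)=\{\pm1\}$ which, being trivial on inner automorphisms of $\bbz$, descends to $\pi_1(B)$; evaluating on the generators yields the relations $x_i\sigma x_i^{-1}=\sigma^{\pm1}$. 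Choosing a lift of each $x_i$ and using that $f_j(x_1,\dots,x_p)$ maps to $1$ in $\pi_1(B)$, hence lies in $\langle\sigma\rangle$, produces $f_j=\sigma^{a_j}$ for integers $a_j$. That these relations, together with the (empty) set of relations of the free group $\bbz=\langle\sigma\rangle$, suffice is the standard presentation of a group extension, which I would either cite or verify directly by a rewriting argument.

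For the equivalences the key computation is $H_1(X;\bbz_2)$ read off the presentation. Abelianizing modulo $2$, the relations $x_i\sigma x_i^{-1}=\sigma^{\pm1}$ become trivial (they say $(1\mp1)\sigma=0$, automatic mod $2$), while $f_j=\sigma^{a_j}$ becomes $\bar\ell_j+\bar a_j\sigma=0$, where $\bar\ell_j\in\bbz_2^p$ is the mod-$2$ exponent-sum vector of $f_j$ and $\bar a_j$ is $a_j$ mod $2$. Comparing with $H_1(B;\bbz_2)=\bbz_2^p/\langle\bar\ell_j\rangle$, the projection killing $\sigma$ gives a surjection $H_1(X;\bbz_2)\to H_1(B;\bbz_2)$ whose kernel is spanned by the class of $\sigma$. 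Consequently $b_1(X;\bbz_2)=b_1(B;\bbz_2)+1$ if and only if $\sigma\neq0$ in $H_1(X;\bbz_2)$, and otherwise the two Betti numbers coincide; this is the combinatorial content of condition $(3)$.

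I would then prove $(1)\Leftrightarrow(3)$ through covering space theory. A fiber-wise double cover $X'\to X$ is a connected double cover restricting on each fiber to the connected double cover $S^1\to S^1$, hence corresponds to a class $\psi\in H^1(X;\bbz_2)=\mathrm{Hom}(\pi_1(X),\bbz_2)$ with $\psi(\sigma)\neq0$. Such a $\psi$ exists precisely when $\sigma\neq0$ in $H_1(X;\bbz_2)$, i.e. exactly under $(3)$; conversely, given such a $\psi$ the associated connected double cover $X'\to X$ restricts to the connected double cover on each fiber and stays locally trivial over $B$, so $X'\to B$ is the desired $S^1$-bundle. For $(2)\Rightarrow(3)$ I note that if every $a_j$ is even then the mod-$2$ relations do not involve $\sigma$, so $\sigma$ is a nonzero free summand of $H_1(X;\bbz_2)$ and $(3)$ holds.

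The main obstacle is the converse $(3)\Rightarrow(2)$, because the integers $a_j$ are not canonical: replacing the lift $x_i$ by $x_i\sigma^{t_i}$ changes $\bar a_j$ by $\sum_i e_{ji}\bar t_i$, that is $\bar a\mapsto\bar a+M\bar t$ where $M=(e_{ji})$ is the mod-$2$ relation matrix of $\pi_1(B)$. The computation of the second paragraph shows that $\sigma\neq0$ in $H_1(X;\bbz_2)$ is equivalent to $\bar a\in\mathrm{im}(M)$, and this is exactly the condition under which $\bar t$ can be chosen so that all $\bar a_j$ become $0$. Thus $(3)$ is equivalent to the existence of a presentation of the stated form with every $a_j$ even, which is how condition $(2)$ is to be read; I would make this normalization explicit, noting that it is automatic (with $M=0$) in the iterated-bundle application, where the relators of $\pi_1(B)$ are commutators. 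This relifting bookkeeping, rather than any deeper input, is where the care is needed.
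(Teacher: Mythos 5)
Your proof is correct, but it takes a genuinely different route from the paper's. The paper proves $(1)\Leftrightarrow(2)$ directly: for $(1)\Rightarrow(2)$ it lifts the generators through $\phi_*\colon\pi_1(Y)\hookrightarrow\pi_1(X)$ and pushes the relations $f_j(y_1,\dots,y_p)=\tau^{b_j}$ forward to get $a_j=2b_j$; for $(2)\Rightarrow(1)$ it constructs the index-two subgroup $H=\langle x_1,\dots,x_p,\sigma^2\rangle$, takes the associated double cover $Y\to X$, and verifies via connectivity of $F=\phi^{-1}(S^1)$ and Massey's covering criterion that $Y\to B$ is again an $S^1$-bundle; $(2)\Leftrightarrow(3)$ is then read off the mod-$2$ abelianization. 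You instead route everything through the intrinsic reformulation that $(3)$ holds if and only if the class of $\sigma$ is nonzero in $H_1(X;\bbz_2)$, proving $(1)\Leftrightarrow(3)$ by classifying fiber-wise double covers by homomorphisms $\psi\colon\pi_1(X)\to\bbz_2$ with $\psi(\sigma)\neq0$ (your $\psi$ with $\psi(x_i)=0$, $\psi(\sigma)=1$ is exactly the paper's $H$), and $(2)\Leftrightarrow(3)$ by the relifting action $\bar a\mapsto\bar a+M\bar t$. What your route buys is precision on a point the paper leaves implicit: the $a_j$ depend on the chosen lifts, and for a presentation of $\pi_1(B)$ containing a relator with odd exponent sum in some $x_i$ the parity of $a_j$ is genuinely not well defined, so condition $(2)$ must be read existentially; your identification of $(3)$ with $\bar a\in\mathrm{im}(M)$, which is exactly the obstruction to normalizing all $a_j$ to be even, makes this rigorous. (Relatedly, in the paper's $(1)\Rightarrow(2)$ step a previously chosen lift $x_i\in\pi_1(X)$ need not lie in the index-two image of $\phi_*$, so it must first be adjusted by a power of $\sigma$ --- an adjustment your existential reading absorbs automatically.) Your closing observation that $M=0$ in the iterated-bundle application, where all relators of Lemma~\ref{presentation} are commutator-type words with even exponent sums, correctly explains why this ambiguity never surfaces elsewhere in the paper; the only step needing the same (modest) level of faith as the paper's own argument is that the double cover associated to $\psi$ is locally trivial over $B$, which you assert and the paper handles with the same brevity.
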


\begin{proof}
The $S^1$-bundle $X\to B$ induces a short exact sequence $1\to\pi_1(S^1)\to\pi_1(X)\to\pi_1(B)\to1$. Taking $\sigma$ as a generator of $\pi_1(S^1)$, it can be seen easily that the first part of the Lemma holds.

(1) $\Longrightarrow$ (2).
Assume that the $S^1$-bundle $X\to B$ is fiber-wisely double covered by another $S^1$-bundle $Y\to B$. Then there is a fiber preserving map $\phi$ between them
$$
\CD
S^1@>{\bar\phi}>>S^1\\
@VV{i'}V@VV{i}V\\
Y@>{\phi}>>X\\
@VV{\pi'}V@VV{\pi}V\\
B@>{=}>>B
\endCD
$$
where $\phi:Y\to X$ and the restriction on the fiber $\bar\phi:S^1\to S^1$ are double covering projections. Therefore $\phi$ induces the following commuting diagram between exact sequences of groups
$$
\CD
\cdots@>>>\pi_1(S^1)@>{i'_*}>>\pi_1(Y)@>{\pi'_*}>>\pi_1(B)@>>>1\\
@.@VV{\bar\phi_*}V@VV{\phi_*}V@VV{=}V\\
\cdots@>>>\pi_1(S^1)@>{i_*}>>\pi_1(X)@>{\pi_*}>>\pi_1(B)@>>>1
\endCD
$$
Since $i_*$ and $\bar\phi_*$ are injective, $i_*\bar\phi_*=\phi_*i'_*$ yields that $i'_*$ is injective. Let $\tau$ be a generator of $\pi_1(S^1)$ of the fiber $S^1$ of the bundle $Y\to B$ so that $\bar\phi_*(\tau)=\sigma^2$. 
We choose a lift of $x_i\in \pi_1(B)$ to $\pi_1(X)$ through $\pi_*$ for each $i$ and use the same notation $x_i$ for the lift. Then, we choose a lift of $x_i\in \pi_1(X)$ to $\pi_1(Y)$ through $\phi_*$ and denote it by $y_i$.  Recalling that
\begin{align*}
\pi_1(Y)&=\langle y_1,\cdots,y_p,\tau\mid y_i{\tau}y_i^{-1}={\tau}^{\pm1}, f_j(y_1,\cdots,y_p)=\tau^{b_j}\rangle,\\
\pi_1(X)&=\langle x_1,\cdots,x_p,\sigma\mid x_i{\sigma}x_i^{-1}={\sigma}^{\pm1}, f_j(x_1,\cdots,x_p)=\sigma^{a_j}\rangle
\end{align*}
for some integers $a_i$ and $b_j$, we must have that
$\phi_*(f_j(y_1,\cdots,y_p))=\phi_*(\tau^{b_j})$ and so $f_j(x_1,\cdots,x_p)=\sigma^{2b_j}$.
Hence $a_j=2b_j$ for all $j$.

(2) $\Longrightarrow$ (1).
Conversely suppose that the fundamental group of the total space $X$ of the $S^1$-bundle $X\to B$ has a presentation of the form as above with all the integers $a_j$ even. Consider the subgroup $H$ of $\pi_1(X)$ generated by $x_1,\cdots,x_p$ and $\sigma^2$. Then $H$ has index $2$ in $\pi_1(X)$. Let $Y$ be the double covering space of $X$ corresponding to $H$ with covering projection $\phi:Y\to X$. Then $\pi'=\pi\phi:Y\to B$ is a bundle with fiber $F=\phi^{-1}(S^1)$ and we have the commutative diagram
$$
\CD
F@>{\bar\phi}>>S^1\\
@VV{i'}V@VV{i}V\\
Y@>{\phi}>>X\\
@VV{\pi'}V@VV{\pi}V\\
B@>{=}>>B
\endCD
$$
where $\bar\phi$ is the restriction of $\phi$.

Now we will show that $F=S^1$ and $\bar\phi$ is a double covering projection. Notice that $\phi_*:\pi_1(Y)=H\to\pi_1(X)$ is the inclusion $H\hookrightarrow \pi_1(X)$ and hence the composition $\pi'_*=\pi_*\phi_*:\pi_1(Y)\to\pi_1(B)$ is surjective by the choice of $H$. It follows that $\pi_0(F)=1$, i.e., $F$ is arcwise connected. Hence $\bar\phi:F\to S^1$ is a (double) covering projection \cite[Lemma~2.1, p.\,150]{M} and so $F=S^1$.

(2) $\Leftrightarrow$ (3).  We note that $H_1(X;\bbz_2)=H_1(X;\bbz)\otimes \bbz_2$ which follows from the universal coefficient theorem for homology groups because $H_0(X;\bbz)$ is torsion free.  Therefore, $H_1(X;\bbz_2)$ agrees with the abelianization of $\pi_1(X)$ over $\bbz_2$. Looking at the description of $\pi_1(X)$ and $\pi_1(B)$, one easily sees the equivalence between (2) and (3).
\end{proof}

The last part of the proof above essentially proves that $b_1(X;\bbz_2)=b_1(B;\bbz_2)$ or $b_1(B;\bbz_2)+1$.  This fact can also be seen in terms of 2nd Stiefel-Whitney class as follows.

\begin{Lemma} \label{first betti}
Let $p\colon X\to B$ be the unit circle bundle of a plane bundle $\xi$ over a connected space $B$.  Then $b_1(X;\bbz_2)=b_1(B;\bbz_2)$ or $b_1(B;\bbz_2)+1$ and the former occurs when $w_2(\xi)\not= 0$ and the latter occurs when $w_2(\xi)=0$.
\end{Lemma}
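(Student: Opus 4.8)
The plan is to run the Gysin sequence of the $S^1$-bundle $p\colon X=S(\xi)\to B$ with $\bbz_2$-coefficients. The first point to record is that although $\xi$ may be non-orientable, the associated local coefficient system is nevertheless trivial over $\bbz_2$: the monodromy action of $\pi_1(B)$ on $H^*(S^1;\bbz_2)$ factors through $\{\pm1\}=\{1\}$ in $\bbz_2$. Hence the Serre spectral sequence of $S^1\to X\to B$ has untwisted $\bbz_2$-coefficients, and it yields the Gysin sequence
\[
\cdots\to H^{n-2}(B;\bbz_2)\xrightarrow{\,\cup\, e}H^{n}(B;\bbz_2)\xrightarrow{p^*}H^{n}(X;\bbz_2)\xrightarrow{p_*}H^{n-1}(B;\bbz_2)\xrightarrow{\,\cup\, e}H^{n+1}(B;\bbz_2)\to\cdots,
\]
where $e\in H^2(B;\bbz_2)$ is the mod $2$ Euler class of the plane bundle $\xi$, which equals $w_2(\xi)$.

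Next I would extract the low-degree segment. Since $B$ and $X$ are connected, $H^0(B;\bbz_2)=H^0(X;\bbz_2)=\bbz_2$ and $H^{-1}(B;\bbz_2)=0$, so the portion of the Gysin sequence around degree $1$ reads
\[
0\to H^1(B;\bbz_2)\xrightarrow{p^*}H^1(X;\bbz_2)\xrightarrow{p_*}H^0(B;\bbz_2)\xrightarrow{\,\cup\, w_2(\xi)}H^2(B;\bbz_2).
\]
The key observation is that the last map sends the generator $1\in H^0(B;\bbz_2)$ to $w_2(\xi)$; hence its kernel is $0$ when $w_2(\xi)\neq0$ and is all of $\bbz_2$ when $w_2(\xi)=0$.

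Finally I would split into the two cases. If $w_2(\xi)\neq0$, then $p_*=0$ on $H^1$, so $p^*$ is an isomorphism and $b_1(X;\bbz_2)=b_1(B;\bbz_2)$. If $w_2(\xi)=0$, then $p_*$ is surjective onto $\bbz_2$, giving a short exact sequence of $\bbz_2$-vector spaces $0\to H^1(B;\bbz_2)\to H^1(X;\bbz_2)\to\bbz_2\to0$; since these are vector spaces it splits, whence $b_1(X;\bbz_2)=b_1(B;\bbz_2)+1$. Here I use that $b_1(\,\cdot\,;\bbz_2)=\dim_{\bbz_2}H^1(\,\cdot\,;\bbz_2)$, which coincides with $\dim_{\bbz_2}H_1(\,\cdot\,;\bbz_2)$ over the field $\bbz_2$.

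The only genuine subtlety — and the step I would be most careful about — is the first one: justifying that the $\bbz_2$-Gysin sequence is available even for a non-orientable $\xi$, and that the relevant transgression differential is cup product with $w_2(\xi)$. Once these standard facts about the mod $2$ Euler class are in place, everything after is a formal diagram chase.
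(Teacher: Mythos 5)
Your proof is correct and follows essentially the same route as the paper: the mod $2$ Gysin sequence of the circle bundle in degrees $0$ and $1$, with cup product by $w_2(\xi)$ as the connecting map, and the case split on whether $w_2(\xi)$ vanishes. Your extra care in justifying that the $\bbz_2$-coefficient Gysin sequence is available for non-orientable $\xi$ (trivial mod $2$ monodromy) is a point the paper leaves implicit, but the argument is the same one.
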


\begin{proof}
Consider the Thom-Gysin sequence associated with the $S^1$-bundle $p\colon X\to B$:
\[
0=H^{-1}(B)\stackrel{\cup w_2(\xi)}\longrightarrow H^1(B)\stackrel{p^*}\longrightarrow H^1(X) \longrightarrow H^0(B)\stackrel{\cup w_2(\xi)}\longrightarrow H^2(B)
\]
where the coefficients are taken with $\bbz_2$.  Here the last map above is injective if $w_2(\xi)\not=0$ and zero if $w_2(\xi)=0$. This implies the lemma since $H^0(B)\cong \bbz_2$.
\end{proof}

\begin{Rmk} \label{zp}
The proof of the equivalence between (2) and (3) in Lemma~\ref{pi} works with $\bbz_p$ coefficient for any prime numer $p$, and we have that $b_1(X;\bbz_p)\le b_1(B;\bbz_p)+1$ and the equality holds if and only if every integer $a_j$is divisible by $p$, where $b_1(\ ;\bbz_p)$ denotes the first Betti number with $\bbz_p$ coefficient.  Therefore, the $S^1$-bundle $\pi\colon X\to B$ is trivial if and only if $b_1(X;\bbz_p)=b_1(B;\bbz_p)+1$ for any prime number $p$.  
\end{Rmk}

\section{Iterated $S^1$-bundles} \label{sect3}

An \emph{iterated $S^1$-bundle} of height $n$ is a sequence of smooth $S^1$-bundles starting with a point:
\begin{align}\label{tower of PS}
X_n\buildrel{}\over\lra X_{n-1}\buildrel{}\over\lra \cdots
\buildrel{}\over\lra X_{1}\buildrel{}\over\lra X_{0}=\{\text{a point}\}.
\end{align}
Each $X_i$ is a closed connected aspherical manifold of dimension $i$ for $i=1,2,\dots,n$ and the $S^1$-bundle $X_i\to X_{i-1}$ induces a short exact sequence:
\[
1\to \pi_1(S^1)\to \pi_1(X_i)\to \pi_1(X_{i-1})\to1.
\]
The total space $X_n$ is diffeomorphic to an $n$-dimensional torus if every $S^1$-bundle $X_i\to X_{i-1}$ in \eqref{tower of PS} is trivial. 
The converse is also true.  

\begin{Prop}
The following are equivalent:
\begin{enumerate}
\item $X_n$ is diffeomorphic to an $n$-dimensional torus,
\item $H_1(X_n;\bbz)$ is isomorphic to $\bbz^n$,
\item every $S^1$-bundle $X_i\to X_{i-1}$ in \eqref{tower of PS} is trivial.
\end{enumerate}
\end{Prop}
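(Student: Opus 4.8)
The plan is to prove the cyclic chain of implications $(3)\Rightarrow(1)\Rightarrow(2)\Rightarrow(3)$. The first two links are essentially immediate, and all of the content sits in $(2)\Rightarrow(3)$, which I would handle by comparing first Betti numbers with $\bbz_p$-coefficients one prime at a time and then invoking the triviality criterion of Remark~\ref{zp}.

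For $(3)\Rightarrow(1)$ I would induct on the height $n$. If every bundle $X_i\to X_{i-1}$ is trivial then $X_i$ is diffeomorphic to $X_{i-1}\times S^1$, and iterating this identification down the tower in \eqref{tower of PS} gives $X_n\cong (S^1)^n$; this is exactly the observation recorded just before the statement. For $(1)\Rightarrow(2)$ there is nothing to prove beyond the fact that $H_1$ is a diffeomorphism invariant and $H_1((S^1)^n;\bbz)\cong\bbz^n$.

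The heart of the argument is $(2)\Rightarrow(3)$. Assuming $H_1(X_n;\bbz)\cong\bbz^n$, I would first pass to $\bbz_p$-coefficients for an arbitrary prime $p$. Since $X_n$ is connected, $H_0(X_n;\bbz)\cong\bbz$ is torsion free, so the universal coefficient theorem gives $H_1(X_n;\bbz_p)\cong H_1(X_n;\bbz)\otimes\bbz_p\cong\bbz_p^n$, and hence $b_1(X_n;\bbz_p)=n$ for \emph{every} prime $p$. On the other side, each $S^1$-bundle $X_i\to X_{i-1}$ sits in the short exact sequence displayed after \eqref{tower of PS}, so the fiber inclusion is injective and Lemma~\ref{pi} together with Remark~\ref{zp} applies, yielding $b_1(X_i;\bbz_p)\le b_1(X_{i-1};\bbz_p)+1$. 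Telescoping from $X_0=\{\text{a point}\}$, where $b_1(X_0;\bbz_p)=0$, produces $b_1(X_n;\bbz_p)\le n$; since we already know equality holds, every individual increment must be maximal, i.e. $b_1(X_i;\bbz_p)=b_1(X_{i-1};\bbz_p)+1$ for all $i$ and all primes $p$. By the triviality criterion stated at the end of Remark~\ref{zp}, this equality holding for all primes is precisely equivalent to triviality of the bundle $X_i\to X_{i-1}$, so every bundle in the tower is trivial. This is $(3)$ and closes the cycle.

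The main obstacle is the last implication, and within it the necessity of running the Betti-number estimate over \emph{all} primes simultaneously rather than over $\bbz_2$ alone. A single prime $p$ only detects divisibility by $p$ of the integers $a_j$ in the presentation of Lemma~\ref{pi} (equivalently, for $p=2$ it only guarantees that the bundle is fiber-wisely double covered), whereas triviality demands that all of these obstructions vanish at once. What makes the uniform estimate available is the torsion-freeness of $H_1(X_n;\bbz)\cong\bbz^n$: it is precisely this that forces $b_1(X_n;\bbz_p)=n$ for every $p$ via universal coefficients, and thereby propagates the equality to every stage of the tower for every prime.
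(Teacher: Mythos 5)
Your proof is correct and follows essentially the same route as the paper: the paper also reduces everything to $(2)\Rightarrow(3)$ (taking $(3)\Rightarrow(1)$ from the remark preceding the proposition and $(1)\Rightarrow(2)$ as immediate) and proves it exactly as you do, via the universal coefficient theorem giving $b_1(X_n;\bbz_p)=n$ for every prime $p$ and repeated application of Remark~\ref{zp}. Your write-up merely makes explicit the telescoping of the inequalities $b_1(X_i;\bbz_p)\le b_1(X_{i-1};\bbz_p)+1$ that the paper compresses into one sentence.
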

\begin{proof}
It suffices to prove (2) $\Rightarrow$ (3).  Suppose that $H_1(X_n;\bbz)$ is isomorphic to $\bbz^n$.  Then $b_1(X_n;\bbz_p)=n$ for any prime number $p$ by the universal coefficient theorem.  On the other hand, repeated use of Remark~\ref{zp} shows that $b_1(X_n;\bbz_p)\le n$ and the equality holds for any prime number $p$ only when every $S^1$-bundle $X_i\to X_{i-1}$ in \eqref{tower of PS} is trivial.  
\end{proof}

Unless the $S^1$-bundles in \eqref{tower of PS} are trivial, the topology of $X_n$ is complicated in general.

\begin{Lemma}\label{presentation}
$\pi_1(X_n)$ has a presentation of the form
\begin{align*}
\left\langle s_1,\cdots, s_n\ \Big|\
s_is_js_i^{-1}=s_n^{a^n_{i,j}}\cdots s_{j+1}^{a^{j+1}_{i,j}}s_j^{\e_{ij}} \ (1\le i<j\le n)
\right\rangle
\end{align*}
where $\e_{ij}=\pm 1$ and the $a^k_{i,j}$ are some integers.
Moreover, the $S^1$-bundle $X_j\to X_{j-1}$ in \eqref{tower of PS} is orientable $($equivalently, principal$)$ if and only if $\e_{ij}=1$ for all $i=1,\cdots,j-1$.
\end{Lemma}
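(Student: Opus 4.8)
The plan is to induct on the height $n$, applying Lemma~\ref{pi} to the topmost bundle at each stage. The base case $n=1$ is immediate, since $X_1=S^1$ gives $\pi_1(X_1)=\langle s_1\rangle$ with no relations, which is the asserted form.

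For the inductive step, assume $\pi_1(X_{n-1})$ has the stated presentation on $s_1,\dots,s_{n-1}$, so that its relators are the words $r_{i,k}:=s_is_ks_i^{-1}\bigl(s_{n-1}^{a^{n-1}_{i,k}}\cdots s_{k+1}^{a^{k+1}_{i,k}}s_k^{\e_{ik}}\bigr)^{-1}$ for $1\le i<k\le n-1$. The bundle $X_n\to X_{n-1}$ induces the short exact sequence $1\to\pi_1(S^1)\to\pi_1(X_n)\to\pi_1(X_{n-1})\to1$ (asphericity of $X_{n-1}$ gives $\pi_2(X_{n-1})=0$), so $i_*$ is injective and Lemma~\ref{pi} applies with $s_n$ playing the role of $\sigma$. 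It delivers
\[
\pi_1(X_n)=\Bigl\langle s_1,\dots,s_{n-1},s_n\ \Big|\ s_is_ns_i^{-1}=s_n^{\pm1}\ (1\le i<n),\ r_{i,k}=s_n^{a^n_{i,k}}\Bigr\rangle
\]
for suitable integers $a^n_{i,k}$. Rewriting $r_{i,k}=s_n^{a^n_{i,k}}$ as $s_is_ks_i^{-1}=s_n^{a^n_{i,k}}s_{n-1}^{a^{n-1}_{i,k}}\cdots s_k^{\e_{ik}}$ puts the relations with $1\le i<k\le n-1$ into the required shape, while the relations $s_is_ns_i^{-1}=s_n^{\e_{in}}$ with $\e_{in}=\pm1$ are precisely the $k=n$ case, in which the string of higher-index powers is empty. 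This completes the induction.

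For the \emph{moreover} clause, note that the exponent $\e_{ij}$ on $s_j$ is fixed once $X_j\to X_{j-1}$ is formed: at that stage Lemma~\ref{pi} produces the relation $s_is_js_i^{-1}=s_j^{\e_{ij}}$, and each later stage only prepends a power of $s_{j+1},\dots,s_n$, leaving the exponent of $s_j$ untouched. Hence $i\mapsto\e_{ij}$ records the conjugation action of $\pi_1(X_{j-1})$ on the fiber subgroup $\langle s_j\rangle\cong\bbz$, which is exactly the monodromy of $X_j\to X_{j-1}$ on $\pi_1(S^1)$, i.e.\ the first Stiefel--Whitney class $w_1$ of the bundle evaluated on the loops $s_i$. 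The bundle is orientable iff $w_1=0$, that is, iff $\e_{ij}=1$ for every $i=1,\dots,j-1$; and, as recalled in Section~\ref{sect2}, over the closed smooth manifold $X_{j-1}$ an $S^1$-bundle is orientable precisely when it is principal. The one step needing genuine care is this last identification of the algebraic sign $\e_{ij}$ with the geometric orientation character: the conjugation action factors through $\aut(\bbz)=\{\pm1\}$ and coincides with the monodromy on $\pi_1(S^1)$, so vanishing of $w_1$ on all of $\pi_1(X_{j-1})$ is equivalent to $\e_{ij}=1$ for all $i$.
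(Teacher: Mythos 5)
Your proof is correct and takes essentially the same route as the paper: the presentation is obtained by applying Lemma~\ref{pi} inductively up the tower, and the \emph{moreover} clause comes down to identifying $\e_{ij}$ with the monodromy action of $\pi_1(X_{j-1})$ on the (co)homology of the fiber, which is trivial precisely when $\e_{ij}=1$ for all $i$ (the paper phrases this via the action on $H^1(F_b)$ rather than $w_1$, but this is the same criterion). Your extra care — justifying injectivity of $i_*$ through asphericity of the base and checking that each later stage only prepends powers of higher-index generators, leaving the exponent of $s_j$ unchanged — simply makes explicit what the paper leaves implicit.
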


\begin{proof}
The former statement follows by applying Lemma~\ref{pi} inductively.

The proof of the latter is as follows.
Note that the $S^1$-bundle $X_j\to X_{j-1}$ is orientable if and only if any loop in the base space $X_{j-1}$ induces the identity map on the first cohomology group of the fiber over the base point of the loop. Equivalently, $\pi_1(X_{j-1},b)$ acts on $H^1(F_b)$ trivially for all $b\in B$. This exactly means that $s_i{s_j}s_i^{-1}={s_j}$ for all the generators $s_i$ of $\pi_1(X_{j-1})$.
\end{proof}

Since the projectivization of a plane bundle, that is an $\bbr{P}^1$-bundle, is an $S^1$-bundle, an iterated $\bbr{P}^1$-bundle is an iterated $S^1$-bundle.

\begin{Prop} \label{char of RP}
Let $X_n$ be the total space of an iterated $S^1$-bundle \eqref{tower of PS}.  Then $b_1(X_n;\bbz_2)\leq n$.  Moreover, the following are equivalent.
\begin{enumerate}
\item[$(1)$] $X_n$ is the total space of an iterated $\bbr P^1$-bundle.
\item[$(2)$] All the exponents $a_{i,j}^k$ in Lemma~\ref{presentation} are even.
\item[$(3)$] $b_1(X_n;\bbz_2)=n$.
\end{enumerate}
\end{Prop}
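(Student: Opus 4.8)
The plan is to prove the inequality $b_1(X_n;\bbz_2)\le n$ first, and then to run the cycle of implications $(1)\Rightarrow(3)\Rightarrow(2)\Rightarrow(1)$, with the mod-$2$ first Betti number serving as the bridge between the purely topological statement $(1)$ and the presentation-dependent statement $(2)$. For the inequality I would induct on the height: $X_0$ is a point so $b_1(X_0;\bbz_2)=0$, and each $X_j\to X_{j-1}$ is the unit circle bundle of a plane bundle over a closed manifold (Section~\ref{sect2}), so Lemma~\ref{first betti} gives $b_1(X_j;\bbz_2)\le b_1(X_{j-1};\bbz_2)+1$; summing over the $n$ stages yields $b_1(X_n;\bbz_2)\le n$.

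Before the implications I would fix the bookkeeping of the exponents. Passing from $X_{j-1}$ to $X_j$ is a single instance of Lemma~\ref{pi}, in which $s_j$ is the fiber generator $\sigma$ and the relators of $\pi_1(X_{j-1})$ are the conjugation relators indexed by the pairs $i<k<j$. The integer attached by Lemma~\ref{pi} to the relator of the pair $(i,k)$ is exactly the exponent $a^{j}_{i,k}$ of $s_j$ occurring in Lemma~\ref{presentation}; hence the family of integers produced at stage $j$ is $\{a^{j}_{i,k}\mid i<k<j\}$, and letting $j$ range over $2,\dots,n$ exhausts all the exponents $a^{k}_{i,j}$ of Lemma~\ref{presentation}. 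Therefore $(2)$ is equivalent to saying that at every stage $j$ all the integers furnished by Lemma~\ref{pi} are even. Granting this, $(2)\Leftrightarrow(3)$ is a telescoping argument: by the equivalence $(2)\Leftrightarrow(3)$ of Lemma~\ref{pi}, stage $j$ has all its integers even iff $b_1(X_j;\bbz_2)=b_1(X_{j-1};\bbz_2)+1$; since each stage contributes at most $1$ to $b_1(X_n;\bbz_2)$ and $b_1(X_0;\bbz_2)=0$, the total equals $n$ iff every stage contributes exactly $1$, i.e.\ iff every stage has all integers even, which is $(2)$.

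For $(2)\Rightarrow(1)$ I would apply the implication $(2)\Rightarrow(1)$ of Lemma~\ref{pi} at each stage, obtaining that $X_j\to X_{j-1}$ is fiber-wisely double covered by an $S^1$-bundle $Y_j\to X_{j-1}$. The step I expect to be the main obstacle is upgrading ``fiber-wisely double covered'' to ``is an $\bbr P^1$-bundle''. Here I would invoke the structure theory recalled at the start of Section~\ref{sect2}: over the closed manifold $X_{j-1}$ the bundle $Y_j$ is the unit circle bundle $S(\zeta_j)$ of a plane bundle $\zeta_j$, and the deck transformation of the double cover $Y_j\to X_j$ is a fiber-preserving free involution whose restriction to each fiber is the deck transformation of a connected double cover $S^1\to S^1$, namely the antipodal map $v\mapsto -v$. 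Since it restricts to the antipodal map on every fiber and preserves fibers, it coincides with the global fiberwise antipodal action of $\{\pm1\}\subset\mathrm{O}(2)$ on $S(\zeta_j)$, so $X_j=S(\zeta_j)/\{\pm1\}=P(\zeta_j)$. Performing this at each stage exhibits the given tower as an iterated $\bbr P^1$-bundle, giving $(1)$.

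Finally $(1)\Rightarrow(3)$ is where homeomorphism invariance enters. If $X_n$ is homeomorphic to the total space of some iterated $\bbr P^1$-bundle $Z$, then $Z$ is an iterated $S^1$-bundle each of whose stages $P(\zeta)$ is fiber-wisely double covered by $S(\zeta)$, so by the implication $(1)\Rightarrow(2)$ of Lemma~\ref{pi} every stage of the tower of $Z$ has all its integers even, whence $b_1(Z;\bbz_2)=n$ by the telescoping argument above applied to that tower. Since $b_1(\ ;\bbz_2)$ is a homeomorphism invariant, $b_1(X_n;\bbz_2)=b_1(Z;\bbz_2)=n$, which is $(3)$. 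This closes the cycle and proves the proposition.
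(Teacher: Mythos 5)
Your overall architecture is exactly the paper's: the authors' entire proof reads ``The former statement follows from Lemma~\ref{first betti}. The latter follows by applying Lemma~\ref{pi} repeatedly,'' and your induction for $b_1(X_n;\bbz_2)\le n$, your identification of the stage-$j$ integers of Lemma~\ref{pi} with the exponents $a^j_{i,k}$ of Lemma~\ref{presentation}, the telescoping argument for $(2)\Leftrightarrow(3)$, and the use of homeomorphism invariance of $b_1(\ ;\bbz_2)$ for $(1)\Rightarrow(3)$ are precisely the details the paper leaves implicit, and they are correct (note that the injectivity hypothesis of Lemma~\ref{pi} holds at each stage because the bases are aspherical, so $\pi_2(X_{j-1})=0$).

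There is, however, one genuine gap, and it sits exactly where you predicted the main obstacle: in $(2)\Rightarrow(1)$, after writing $Y_j=S(\zeta_j)$ you assert that the deck involution $\tau$ of $Y_j\to X_j$ ``restricts to the antipodal map $v\mapsto -v$ on every fiber.'' This does not follow. The reduction of the structure group to $\mathrm{O}(2)$, i.e.\ the choice of the plane bundle $\zeta_j$, was made without reference to $\tau$; on each fiber $\tau$ is merely a free involution of the circle, which is smoothly \emph{conjugate} to the antipodal map but in general not equal to the linear map $v\mapsto -v$ of $\zeta_{j,b}$ (conjugate rotation by $\pi$ by any non-linear diffeomorphism to see this). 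So you cannot conclude $X_j=S(\zeta_j)/\{\pm1\}=P(\zeta_j)$ for that particular $\zeta_j$; the reduction must be chosen \emph{equivariantly}. The standard repair is short: pick any fiberwise Riemannian metric on the circle fibers of $Y_j$, average it over the $\bbz_2$-action generated by $\tau$, and normalize each fiber to length $2\pi$. Each fiber is then isometric to the round circle and $\tau$ is a free isometric involution of it; since reflections of the round circle have fixed points, $\tau$ must be rotation by $\pi$, i.e.\ the genuine antipodal map of the plane-bundle structure $\zeta_j$ induced by this invariant metric. With that choice, $X_j=S(\zeta_j)/\{\pm1\}=P(\zeta_j)$ as you wanted. (Equivalently: the group of diffeomorphisms of $S^1$ commuting with the antipodal map deformation retracts onto $\mathrm{O}(2)$, so the pair $(Y_j,\tau)$ admits an $\mathrm{O}(2)$-atlas in which $\tau$ is fiberwise $-1$.) To be fair, the paper's own proof silently assumes this same equivalence between ``fiber-wisely double covered'' and ``projectivization,'' so your proposal fails only in asserting, rather than proving, the one identification that needed an argument.
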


\begin{proof}
The former statement follows from Lemma~\ref{first betti}.  The latter follows by applying Lemma~\ref{pi} repeatedly.
\end{proof}

Finally, we give an example of iterated $\bbr P^1$-bundle which motivated the study of this paper.

\begin{Example}[Real Bott tower] \label{real Bott}
An iterated $\bbr P^1$-bundle of height $n$:
\begin{align} \label{real Bott tower}
B_n\buildrel{}\over\lra B_{n-1}\buildrel{}\over\lra \cdots
\buildrel{}\over\lra B_{1}\buildrel{}\over\lra B_{0}=\{\text{a point}\},
\end{align}
where each fibration $B_i\to B_{i-1}$ for $i=1,2,\dots,n$ is the projectivization of a Whitney sum of two line bundles over $B_{i-1}$ is called a \emph{real Bott tower}  of height $n$, and the total space $B_n$ is called a \emph{real Bott manifold}.  At each stage, one of the two line bundles may be assumed to be trivial without loss of generality because projectivization remain unchanged under tensor product with a line bundle. The same construction works in the complex category and in this case the tower is called a Bott tower and the total space $B_n$ is called a Bott manifold. A two stage Bott manifold is nothing but a Hirzebruch surface.  A Bott manifold provides an example of a closed smooth toric variety and a real Bott manifold provides an example of a closed smooth real toric variety.

A real Bott manifold $B_n$ also provides an example of a flat Riemannian manifold.  In fact, it can be described as the quotient of $\bbr^n$ by a group $\pi_n$ generated by Euclidean motions $s_i$'s $(i=1,\dots,n)$ on $\bbr^n$ defined by
\[
s_i(x_1,\dots,x_n):=(x_1,\dots,x_{i-1}, x_i+\frac{1}{2}, \epsilon_{i+1}^ix_{i+1},\dots, \epsilon_{n}^ix_n),
\]
where $\epsilon_j^i=\pm 1$ for $1\le i<j\le n$ and $\epsilon^i_j$'s are determined by the line bundles used to define the real Bott tower \eqref{real Bott tower}.  The action of $\pi_n$ on $\bbr^n$ is free so that $\pi_n$ is the fundamental group of the real Bott manifold $B_n$.  It is generated by $s_i$'s $(i=1,\dots,n)$ with relations
\[
s_is_js_i^{-1}=s_j^{\e^i_j} \quad\text{for $1\le i<j\le n$}.
\]
The subgroup of $\pi_n$ generated by $s_i^2$'s $(i=1,\dots,n)$ is the translations $\bbz^n$ and the quotient $\pi_n/\bbz^n$ is an elementary 2-group of rank $n$. Note that the natural projections $\bbr^n\to \bbr^{n-1}\to \dots\to \bbr^1\to \bbr^0=\{\text{a point}\}$ induce a real Bott tower.

The diffeomorphism classification of real Bott manifolds has been completed in \cite{CMO}.  The paper \cite{CMO} also relates the diffeomorphism classification of real Bott manifolds with the classification of acyclic digraphs (directed graphs with no direct cycles) up to some equivalence.
\end{Example}

\section{Infra-nilmanifolds} \label{sect4}

The purpose of this section is to prove Theorem~\ref{main theo1} in the Introduction.  We continue to use notations in Section~\ref{sect3}.
A group $G$ is called \emph{supersolvable} if there exists a finite normal series
$$
G=\bar{G}_1\supset \bar{G}_2\supset \cdots\supset \bar{G}_c\supset \bar{G}_{c+1}=1
$$
such that each quotient group $\bar{G}_i/\bar{G}_{i+1}$ is cyclic and each $\bar{G}_i$ is normal in $G$.

\begin{Lemma}\label{supersolv}
$\pi_1(X_n)$ is a supersolvable group.
\end{Lemma}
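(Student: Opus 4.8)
The plan is to build the required normal series directly from the bundle structure rather than from the presentation in Lemma~\ref{presentation}. For each $i$ with $0\le i\le n$, let $p_i\colon X_n\to X_i$ denote the composite of the bundle projections $X_n\to X_{n-1}\to\cdots\to X_i$, and set $N_i:=\ker\bigl((p_i)_*\colon \pi_1(X_n)\to\pi_1(X_i)\bigr)$. Being the kernel of a group homomorphism, each $N_i$ is automatically normal in $\pi_1(X_n)$, so the normality requirement in the definition of supersolvability is met for free. Since $p_n=\mathrm{id}$ and $X_0$ is a point, we have $N_n=1$ and $N_0=\pi_1(X_n)$; and the factorization $p_{i-1}=q_i\circ p_i$, where $q_i\colon X_i\to X_{i-1}$ is the $i$-th bundle projection, gives $N_i\subset N_{i-1}$. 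This produces a finite chain
$$
\pi_1(X_n)=N_0\supset N_1\supset\cdots\supset N_{n-1}\supset N_n=1
$$
of subgroups each normal in $\pi_1(X_n)$.

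It then remains to identify the successive quotients $N_{i-1}/N_i$ and see that they are cyclic. Here I would invoke the short exact sequence $1\to\pi_1(S^1)\to\pi_1(X_i)\to\pi_1(X_{i-1})\to1$ attached to the bundle $q_i\colon X_i\to X_{i-1}$. Since $(p_i)_*$ is surjective --- the bundle projections induce surjections on fundamental groups by the short exact sequences in Section~\ref{sect3} --- it descends to an isomorphism $\pi_1(X_n)/N_i\xrightarrow{\ \sim\ }\pi_1(X_i)$, under which $N_{i-1}/N_i$ corresponds to $\ker\bigl((q_i)_*\bigr)$, the image of $\pi_1(S^1)$ in $\pi_1(X_i)$. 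Hence $N_{i-1}/N_i\cong\pi_1(S^1)\cong\bbz$, which is cyclic, and the displayed chain is the desired supersolvable series.

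I do not expect a genuine obstacle, as the construction is essentially forced; the only points needing care are the surjectivity of $(p_i)_*$ and the identification $N_{i-1}=(p_i)_*^{-1}(\pi_1(S^1))$, both of which follow formally by chasing the commuting triangle $p_{i-1}=q_i\circ p_i$ through the short exact sequences of the individual bundles. Should one prefer to read the argument off the presentation of Lemma~\ref{presentation} instead, the same chain is given by the subgroups $\langle s_{i+1},\dots,s_n\rangle$, but then one must verify their normality by hand from the conjugation relations; the kernel description avoids that bookkeeping.
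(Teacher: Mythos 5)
Your proof is correct, but it verifies the series by a genuinely different mechanism than the paper. The paper works entirely with the presentation of Lemma~\ref{presentation}: it sets $\bar\pi_j=\langle s_j,\dots,s_n\rangle$, observes that the quotients $\bar\pi_i/\bar\pi_{i+1}\cong\langle s_i\rangle$ are cyclic by construction, and then checks normality of each $\bar\pi_j$ in $\pi_1(X_n)$ by hand from the conjugation relations $s_is_js_i^{-1}=s_n^{a^n_{i,j}}\cdots s_{j+1}^{a^{j+1}_{i,j}}s_j^{\e_{ij}}$. You instead define the terms as kernels $N_i=\ker\bigl((p_i)_*\bigr)$ of the homomorphisms induced by the composite projections $X_n\to X_i$, so normality in the whole group is automatic, and you identify $N_{i-1}/N_i\cong\ker\bigl((q_i)_*\bigr)\cong\pi_1(S^1)\cong\bbz$ by chasing the short exact sequences $1\to\pi_1(S^1)\to\pi_1(X_i)\to\pi_1(X_{i-1})\to1$ that the paper records in Section~\ref{sect3} (injectivity of $i_*$ there rests on asphericity of the base, which the paper has already noted, so the kernel really is infinite cyclic rather than a quotient of $\bbz$). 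The two series consist of the same subgroups --- your $N_i$ is the paper's $\bar\pi_{i+1}$, since by induction on the tower the kernel of $\pi_1(X_n)\to\pi_1(X_i)$ is generated by $s_{i+1},\dots,s_n$ --- but the trade-off is real: your kernel description eliminates the conjugation bookkeeping at the cost of a routine diagram chase, and is arguably cleaner for this lemma in isolation, while the paper's presentation-based argument keeps the explicit generators in hand, which it immediately reuses throughout Section~\ref{sect4} (for $\Gamma_n$, $\Lambda_n$, and the index computations), so its choice is a matter of economy over the whole section rather than of necessity here.
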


\begin{proof}
We consider the subgroups $\bar\pi_j$ of $\pi_1(X_n)$ generated by $s_{j},\cdots, s_n$. Then we have a finite normal series
$$
\pi_1(X_n)=\bar\pi_1\supset\bar\pi_{2}\supset\cdots\supset \bar\pi_n\supset\bar\pi_{n+1}=1
$$
such that $\bar\pi_i/\bar\pi_{i+1}\cong\langle s_i\rangle$. By Lemma~\ref{presentation}, it follows easily that each $\bar\pi_i$ is a normal subgroup of $\pi_1(X_n)$.
\end{proof}

However, the normal series in the above proof is not always a central series. This implies that $\pi_1(X_n)$ is not always a nilpotent group. We will show in Theorem~\ref{virt nilp} that $\pi_1(X_n)$ is always virtually nilpotent. Note further that the subgroup $\bar\pi_i$ of $\pi_1(X_n)$ is isomorphic to $\pi_1(X_{n-i+1})$.

The projection $\pi_1(X_i)\to\pi_1(X_{i-1})$ sends $s_j$ to $s_j$ for $j=1,\cdots,i-1$ with kernel $\pi_1(S^1)=\langle s_i\rangle$.
For the simplicity, we will write $\pi_i=\pi_1(X_i)$ and $A_i=\pi_1(S^1)$ with generator $s_i$. Thus we have a short exact sequence
\begin{align}\label{SES}
1\lra A_i\lra\pi_i\lra\pi_{i-1}\lra1.
\end{align}
Let $\Gamma_{i}$ be the subgroup of $\pi_i$ generated by $s_1^2,\cdots, s_i^2$. Then $\Gamma_i$ is mapped onto $\Gamma_{i-1}$ under the projection $\pi_i\to\pi_{i-1}$ with kernel $\langle s_i^2\rangle$, which induces a short exact sequence
\begin{align}\label{SES1}
1\lra\langle s_i^2\rangle\lra\Gamma_i\lra\Gamma_{i-1}\lra1.
\end{align}

\begin{Lemma}\label{normal}
$\Gamma_n$ is a normal subgroup of $\pi_n$ with index $2^n$.
\end{Lemma}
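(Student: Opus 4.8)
The plan is to prove both assertions at once by induction on the height $n$, using the short exact sequences \eqref{SES} and \eqref{SES1} together with the explicit presentation of Lemma~\ref{presentation}. For $n=1$ we have $\pi_1=\langle s_1\rangle\cong\bbz$ and $\Gamma_1=\langle s_1^2\rangle$, which is normal of index $2$. For the inductive step I assume $\Gamma_{n-1}$ is normal in $\pi_{n-1}$ of index $2^{n-1}$ and must promote this across the extension $1\lra\langle s_n\rangle\lra\pi_n\lra\pi_{n-1}\lra1$ of \eqref{SES}. Since \eqref{SES1} records that $\Gamma_n$ surjects onto $\Gamma_{n-1}$ with kernel $\langle s_n^2\rangle$, once normality of $\Gamma_n$ is in hand the index will drop out of a comparison of the two extensions.

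For normality it suffices to check that conjugation by each generator sends every generator $s_j^2$ of $\Gamma_n$ back into $\Gamma_n$. The case $j=n$ is immediate: the top relations give $s_ks_ns_k^{-1}=s_n^{\e_{kn}}$, so $s_ks_n^2s_k^{-1}=s_n^{\pm2}\in\langle s_n^2\rangle\subseteq\Gamma_n$. For $j<n$ and $k<j$ I would write $s_ks_js_k^{-1}=u\,s_j^{\e_{kj}}$ with $u=s_n^{a^n_{k,j}}\cdots s_{j+1}^{a^{j+1}_{k,j}}\in\ol\pi_{j+1}$, so that $s_ks_j^2s_k^{-1}=\big(u\,s_j^{\e_{kj}}u\,s_j^{-\e_{kj}}\big)\,s_j^{2\e_{kj}}$; the last factor lies in $\langle s_j^2\rangle$, and the bracketed factor lies in $\ol\pi_{j+1}$. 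The key arithmetic observation is that, because the diagonal part of each relation is the sign $\e_{ij}=\pm1$, passing to squares (and, after collection, to higher even powers) turns every exponent that is produced into one of the form $a^{\,\cdot}(1\pm1)$ up to strictly higher corrections; the leading contribution is even, and the corrections are even by a downward induction on $j$ carried out inside $\ol\pi_{j+1}\cong\pi_1(X_{n-j})$, for which the Lemma is already known. This is exactly the step where the $\pm1$ (equivalently, $\mathrm{O}(2)$ structure group) nature of the monodromy is essential; an abelianization argument cannot replace it, since $\pi_n/\Gamma_n$ is in general a nonabelian $2$-group.

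Finally the index: with $\Gamma_n$ normal and \eqref{SES1} exact, the projection $\pi_n\lra\pi_{n-1}$ carries $\Gamma_n$ onto $\Gamma_{n-1}$ and induces a short exact sequence $1\lra\langle s_n\rangle/\langle s_n^2\rangle\lra\pi_n/\Gamma_n\lra\pi_{n-1}/\Gamma_{n-1}\lra1$, whence $[\pi_n:\Gamma_n]=2\cdot[\pi_{n-1}:\Gamma_{n-1}]=2^n$ by the inductive hypothesis. Alternatively one may run the whole count along the normal series $\pi_n=\ol\pi_1\supset\cdots\supset\ol\pi_{n+1}=1$ of Lemma~\ref{supersolv}: setting $M_i=\Gamma_n\cap\ol\pi_i$, each image of $M_i$ in $\ol\pi_i/\ol\pi_{i+1}\cong\bbz$ is $2\bbz$, contributing a factor $2$. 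The main obstacle in either route is the even\-ness bookkeeping behind the assertion of \eqref{SES1}, namely that $\Gamma_n$ meets the fiber $\langle s_n\rangle$ in exactly $\langle s_n^2\rangle$ rather than in all of $\langle s_n\rangle$; this is precisely the statement that the index does not collapse, and it is the point that must be secured with care in the conjugation computation above.
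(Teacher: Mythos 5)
Your architecture is the paper's: induct on $n$ through the extensions \eqref{SES} and \eqref{SES1}, verify normality on the generators $s_j^2$ (the case $j=n$ handled identically via $s_ks_n^2s_k^{-1}=s_n^{\pm2}$), and extract the index $2^n$ from the comparison of the two extensions, exactly as in the paper's $3\times3$ diagram. The genuine gap is at the step you yourself identify as the crux. For $k<j<n$ you reduce to showing that $u\,\bigl(s_j^{\e_{kj}}us_j^{-\e_{kj}}\bigr)\in\Gamma_n\cap\bar\pi_{j+1}$ for $u=s_n^{a^n_{k,j}}\cdots s_{j+1}^{a^{j+1}_{k,j}}$, and you assert a ``key arithmetic observation'' that every exponent produced has the form $a(1\pm1)$ up to strictly higher corrections which are ``even by a downward induction on $j$.'' This is not proved, and as stated the evenness of the corrections fails: they involve the twisting exponents of the \emph{lower} relations linearly, not in an $a(1\pm1)$ pattern. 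Concretely, take $n=4$ with $s_1s_2s_1^{-1}=s_3s_2$, $s_2s_3s_2^{-1}=s_4s_3$, and all other relations untwisted (all $\e=1$; this group is a lattice in a $4$-dimensional simply connected nilpotent Lie group and is realized by a tower of \emph{principal} $S^1$-bundles with exactly this presentation, since every extension of the fundamental group of a closed aspherical manifold by $\bbz$ is realized by an $S^1$-bundle). Then $s_1s_2^2s_1^{-1}=(s_3s_2)^2=s_4s_3^2s_2^2$: here $u\,s_2us_2^{-1}=s_3\cdot s_4s_3=s_4s_3^2$, whose $s_3$-exponent is the even $1+1$ but whose correction $s_4^1$ is odd, because the product $a^3_{1,2}\,a^4_{2,3}=1$ is odd. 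Moreover, in this group every element has a unique normal form $s_1^{c_1}s_2^{c_2}s_3^{c_3}s_4^{c_4}$ and one checks that $\langle s_1^2,s_2^2,s_3^2,s_4^2\rangle$ consists precisely of the forms with all $c_i$ even; since $s_4s_3^2s_2^2=s_2^2s_3^2s_4^{-3}$, the conjugate $s_1s_2^2s_1^{-1}$ genuinely falls outside $\Gamma_4$. Your proposed inner induction cannot repair this: knowing the Lemma for $\bar\pi_{j+1}$ controls conjugates of elements of $\langle s_{j+1}^2,\dots,s_n^2\rangle$, but says nothing about $u\cdot s_j^{\e}us_j^{-\e}$ for the relevant $u$; in the quotient $2$-group the needed identity $\bar u\,\gamma(\bar u)=1$ would force $\gamma$ to be inversion, which is not an automorphism exactly when the quotient is nonabelian --- a tension you notice but do not resolve.

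Two further remarks. First, your index computation inherits the same defect through \eqref{SES1}: the kernel statement $\Gamma_n\cap\langle s_n\rangle=\langle s_n^2\rangle$ is precisely the evenness assertion you deferred to the conjugation bookkeeping, so neither half of the Lemma is actually established by the proposal. Second, for fairness of comparison: the paper's own proof is briefest at the identical spot --- it passes from ``$\bs_i\bs_j^2\bs_i^{-1}$ is a word in $\bs_1^2,\dots,\bs_{n-1}^2$'' to ``$s_is_j^2s_i^{-1}$ is a word in $s_1^2,\dots,s_{n-1}^2,s_n^2$,'' silently assuming that the discrepancy, which a priori lies only in $\langle s_n\rangle$, has even exponent. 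So you have followed the paper's route faithfully; but where the paper asserts, you attempt a justification, and the justification offered is refuted by the computation above. The bookkeeping does close when all the exponents $a^k_{i,j}$ are even (the iterated $\bbr P^1$ situation of Proposition~\ref{char of RP}, in particular the real Bott case), since then every exponent produced by conjugating and collecting is a sum of terms $a(1\pm1)$ and of products of even integers; any complete proof along these lines must make such a hypothesis, or some replacement argument, explicit.
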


\begin{proof}
For each $s_i$, we denote by $c(s_i)$ the conjugation by $s_i$. Since $c(s_i)(s_n)=s_n^{\e_{in}}$, the conjugate automorphism $c(s_i)$ on $\pi_n$ induces the following commutative diagram
$$
\CD
1@>>>A_n@>>>\pi_n@>>>\pi_{n-1}@>>>1\\
@.@VV{c(s_i)}V@VV{c(s_i)}V@VV{c(\bar{s}_i)}V\\
1@>>>A_n@>>>\pi_n@>>>\pi_{n-1}@>>>1
\endCD
$$
where $\bar{s}_i$ is the image of $s_i$ under $\pi_n\to\pi_{n-1}$.
This diagram gives rise to the following commutative diagram of short exact sequences
$$
\CD
1@>>>\langle s_n^2\rangle@>>>\Gamma_n@>>>\Gamma_{n-1}@>>>1\\
@.@VV{c(s_i)}V@VV{c(s_i)}V@VV{c(\bar{s}_i)}V\\
1@>>>\langle s_n^2\rangle@>>>\Gamma'_n@>>>\Gamma'_{n-1}@>>>1
\endCD
$$
where $\Gamma'_n$ and $\Gamma'_{n-1}$ are the images of $\Gamma_n$ and $\Gamma_{n-1}$ under $c(s_i)$ and $c(\bar{s}_i)$ respectively.
In order to show that $\Gamma_n$ is a normal subgroup of $\pi_n$, it suffices to show that $\Gamma'_n=\Gamma_n$. For this purpose we will use induction on $n$. It is clear that $\Gamma'_1=\Gamma_1$. Assume that $\Gamma'_{n-1}=\Gamma_{n-1}$. Consider an element $s_is_j^2s_i^{-1}$ of $\Gamma'_n$. It is mapped to the element $\bar{s}_i\bar{s}_j^2\bar{s}_i^{-1}$ of $\Gamma'_{n-1}=\Gamma_{n-1}$. Hence $\bar{s}_i\bar{s}_j^2\bar{s}_i^{-1}$ is a word of $\bar{s}_1^2,\cdots, \bar{s}_{n-1}^2$. This therefore implies that $s_is_j^2s_i^{-1}$ is a word of ${s}_1^2,\cdots, {s}_{n-1}^2,s_n^2$, which means that $s_is_j^2s_i^{-1}\in\Gamma_n$. Consequently $\Gamma'_n=\Gamma_n$.

Furthermore, we have the following commutative diagram of short exact sequences
$$
\CD
@.1@.1@.1@.\\
@.@AAA@AAA@AAA\\
1@>>>\bbz_2@>>>\pi_n/\Gamma_n@>>>\pi_{n-1}/\Gamma_{n-1}@>>>1\\
@.@AAA@AAA@AAA\\
1@>>>A_n@>>>\pi_n@>>>\pi_{n-1}@>>>1\\
@.@AAA@AAA@AAA\\
1@>>>\langle s_n^2\rangle@>>>\Gamma_n@>>>\Gamma_{n-1}@>>>1\\
@.@AAA@AAA@AAA\\
@.1@.1@.1
\endCD
$$
This, in particular, shows that the order of $\pi_n/\Gamma_n$ equals $2^n$ by induction.
\end{proof}

\begin{Lemma}\label{virt nilp}
$\Gamma_n$ is a nilpotent group of rank $n$. Therefore, $\pi_n$ is a torsion-free virtually nilpotent group of rank $n$.
\end{Lemma}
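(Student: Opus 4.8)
The plan is to prove by induction on $n$ that $\Gamma_n$ is nilpotent, using the short exact sequence \eqref{SES1} as the inductive engine, and then to read off the rank and the statement about $\pi_n$. The base case is immediate: $\Gamma_1=\langle s_1^2\rangle\cong\bbz$ is abelian, hence nilpotent of rank $1$. For the inductive step I assume $\Gamma_{n-1}$ is nilpotent of rank $n-1$ and consider
$$
1\lra\langle s_n^2\rangle\lra\Gamma_n\lra\Gamma_{n-1}\lra1,
$$
where $\langle s_n^2\rangle\cong\bbz$ because the fiber circle injects into $\pi_1(X_n)$.

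The key observation, and the crux of the whole argument, is that although $s_n^2$ need not be central in the ambient group $\pi_n$ (conjugation by $s_i$ can invert $s_n$), it becomes central once we pass to the subgroup $\Gamma_n$ generated by the squares. Indeed, the top relation of Lemma~\ref{presentation} reads $s_is_ns_i^{-1}=s_n^{\e_{in}}$ with $\e_{in}=\pm1$, so conjugating twice gives
$$
s_i^2s_ns_i^{-2}=s_n^{\e_{in}^2}=s_n,
$$
whence $s_i^2$ commutes with $s_n^2$ for every $i$. Since $\Gamma_n$ is generated by $s_1^2,\dots,s_n^2$, this shows $\langle s_n^2\rangle$ lies in the center of $\Gamma_n$; that is, \eqref{SES1} is a \emph{central} extension. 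The phenomenon that squaring annihilates the sign $\e_{in}$ is precisely why passing to the subgroup generated by squares converts the non-central relations of $\pi_n$ into central ones.

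Granting centrality, I would invoke the elementary fact that a central extension of a nilpotent group by $\bbz$ is again nilpotent: if $\gamma_{c+1}(\Gamma_{n-1})=1$, then the lower central series of $\Gamma_n$ satisfies $\gamma_{c+1}(\Gamma_n)\subseteq\langle s_n^2\rangle$, and one further commutator with the central subgroup kills it, so $\gamma_{c+2}(\Gamma_n)=1$. This completes the induction and shows $\Gamma_n$ is nilpotent. The rank then follows from additivity of the Hirsch length along \eqref{SES1}: the series $\langle s_n^2\rangle\subset\langle s_{n-1}^2,s_n^2\rangle\subset\cdots\subset\Gamma_n$ has $n$ infinite cyclic quotients $\langle s_i^2\rangle$, so $\Gamma_n$ has rank $n$.

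Finally, for the assertion about $\pi_n$ I would argue as follows. By Lemma~\ref{normal}, $\Gamma_n$ is a normal nilpotent subgroup of finite index $2^n$, so $\pi_n$ is virtually nilpotent, and since the Hirsch length is unchanged on passing to a finite-index subgroup, $\pi_n$ has rank $n$. Torsion-freeness is immediate from the fact that $X_n$ is a closed aspherical manifold: a nontrivial finite cyclic subgroup would have infinite cohomological dimension, which is incompatible with $\pi_n$ admitting a finite-dimensional $K(\pi_n,1)$. I expect the only delicate point to be the verification of centrality in the second step; the remaining ingredients are standard facts about nilpotent groups and Hirsch length.
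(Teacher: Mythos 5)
Your proof is correct, and it runs along the same inductive spine as the paper's --- induction on $n$ through the short exact sequence \eqref{SES1} --- but the finishing mechanism is genuinely different. The paper pulls back the explicit central series \eqref{b-series} of $\Gamma_{n-1}$ term by term, identifying $p^{-1}(\langle \bar{s}_i^2,\cdots,\bar{s}_{n-1}^2\rangle)=\langle s_i^2,\cdots,s_n^2\rangle$, and so produces the concrete central series \eqref{series} with infinite cyclic quotients, which delivers nilpotency and rank in one stroke. You instead isolate the single computation $s_i^2s_ns_i^{-2}=s_n^{\e_{in}^2}=s_n$, conclude that \eqref{SES1} is a \emph{central} extension, and invoke the standard lower-central-series fact that a central extension of a nilpotent group by $\bbz$ is nilpotent, recovering the rank afterwards by additivity of Hirsch length along \eqref{SES1}. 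Your centrality computation is exactly the one the paper records only later, in the proof of Lemma~\ref{Lambda_n} (to show $1\to\Gamma_n\to\Lambda_n\to\bbz_2\to1$ is central); notably, it is also what the paper's terse closing sentence (``since the most right vertical is a central series, so is the induced middle vertical'') implicitly needs at the bottom step, namely $[\Gamma_n,\langle s_n^2\rangle]=1$, so your version makes explicit a point the paper glosses over. You also supply an argument for torsion-freeness (a finite-dimensional $K(\pi,1)$ forbids torsion), which the paper's proof passes over in silence; alternatively it follows from Lemma~\ref{supersolv}, since the normal series there exhibits $\pi_n$ as poly-(infinite cyclic). In sum, the paper's route buys an explicit series of subgroups that is reused later, while yours is shorter and more conceptual; both are sound.
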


\begin{proof}
It suffices to show that $\Gamma_n$ has a finite central series
$$
\Gamma_n=\Gamma^1\supset\Gamma^2\supset \cdots\supset\Gamma^c\supset\Gamma^{c+1}=1
$$
such that the quotient groups $\Gamma^i/\Gamma^{i+1}$ are isomorphic to some $\bbz^{k_i}$. We will use induction on $n$ to show that the series
\begin{align}\label{series}
\Gamma_{n}=\langle s_1^2,\cdots,s_{n}^2\rangle\supset \langle s_2^2,\cdots, s_{n}^2\rangle\supset\cdots\supset\langle s_{n-1}^2,s_n^2\rangle\supset\langle s_n^2\rangle\supset\{1\}
\end{align}
is a required central series with successive quotient groups isomorphic to $\bbz$.

The case where $n=1$ is obvious and hence we assume the following: $\Gamma_{n-1}$ has such a central series. To avoid confusion let us use $\bs_1,\cdots,\bs_{n-1}$ in the presentation of $\pi_{n-1}$ given in Lemma~\ref{presentation} so that $s_i\in\pi_n$ is mapped to $\bs_i\in\pi_{n-1}$ for $i=1,\cdots,n-1$. Then $\Gamma_{n-1}=\langle \bs_1^2,\cdots, \bs_{n-1}^2\rangle$ with index $2^{n-1}$ in $\pi_{n-1}$, and by induction hypothesis, $\Gamma_{n-1}$ has a central series
\begin{align}\label{b-series}
\Gamma_{n-1}=\langle \bs_1^2,\cdots,\bs_{n-1}^2\rangle\supset \langle \bs_2^2,\cdots, \bs_{n-1}^2\rangle\supset\cdots\supset\langle \bs_{n-1}^2\rangle\supset\{1\}
\end{align}
with successive quotient groups isomorphic to $\bbz$. Using the short exact sequence $1\to\langle s_n^2\rangle\to\Gamma_n\buildrel{p}\over\to\Gamma_{n-1}\to1$, we take the pullback of the series (\ref{b-series}). Namely, for each subgroup $\langle \bs_i^2,\cdots, \bs_{n-1}^2\rangle$ of $\Gamma_{n-1}$, we consider the subgroup $p^{-1}(\langle \bs_i^2,\cdots, \bs_{n-1}^2\rangle)$ of $\Gamma_n$. This group fits in a short exact sequence $1\to\langle s_n^2\rangle\to p^{-1}(\langle \bs_i^2,\cdots, \bs_{n-1}^2\rangle)\to\langle \bs_i^2,\cdots, \bs_{n-1}^2\rangle\to1$, which induces that $p^{-1}(\langle \bs_i^2,\cdots, \bs_{n-1}^2\rangle)=\langle s_i^2,\cdots, s_{n}^2\rangle$. Therefore, we have the following commutative diagram
$$
\CD
1@>>> \langle s_n^2\rangle @>>>\Gamma_n@>>>\Gamma_{n-1}@>>>1\\
@.@AA=A@AAA@AAA\\
1@>>> \langle s_n^2\rangle @>>>\langle s_2^2,\cdots, s_{n}^2\rangle@>>>
\langle \bs_2^2,\cdots, \bs_{n-1}^2\rangle@>>>1\\
@.@AA=A@AAA@AAA\\
@.\vdots@.\vdots@.\vdots\\
@.@AA=A@AAA@AAA\\
1@>>>\langle s_n^2\rangle@>>> \langle s_{n-2}^2,s_{n}^2\rangle@>>>\langle \bs_{n-1}^2\rangle@>>>1\\
@.@AA=A@AAA@AAA\\
1@>>>\langle s_n^2\rangle@>>> \langle s_{n}^2\rangle@>>>1@>>>1
\endCD
$$
Finally we note that since the most right vertical is a central series, so is the induced middle vertical. Clearly the rank of $\Gamma_n$ is $n$.
\end{proof}

In fact, $\pi_n$ contains another nilpotent normal subgroup which is slightly larger than $\Gamma_n$ as is shown in the following lemma.

\begin{Lemma} \label{Lambda_n}
Let $\Lambda_n$ be the subgroup of $\pi_n$ generated by $s_1^2,\cdots, s_{n-1}^2,s_n$. Then $\Lambda_n$ is a nilpotent normal subgroup of $\pi_n$ which has rank $n$ and index $2^{n-1}$.
\end{Lemma}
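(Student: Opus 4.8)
The plan is to realize $\Lambda_n$ as the full preimage of $\Gamma_{n-1}$ under the projection $p\colon\pi_n\to\pi_{n-1}$ coming from \eqref{SES}, and then to exploit the central extension structure this produces. First I would observe that, since $p$ sends $s_i^2$ to $\bs_i^2$ for $i<n$ and kills $s_n$, the image $p(\Lambda_n)$ is $\langle\bs_1^2,\dots,\bs_{n-1}^2\rangle=\Gamma_{n-1}$, while $\ker p=A_n=\langle s_n\rangle$ is contained in $\Lambda_n$; hence $\Lambda_n=p^{-1}(\Gamma_{n-1})$ and we obtain a short exact sequence
\[
1\lra A_n\lra\Lambda_n\buildrel{p}\over\lra\Gamma_{n-1}\lra1.
\]

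Normality is then immediate: by Lemma~\ref{normal} applied at height $n-1$, the subgroup $\Gamma_{n-1}$ is normal in $\pi_{n-1}$, and the preimage of a normal subgroup under the surjection $p$ is again normal, so $\Lambda_n=p^{-1}(\Gamma_{n-1})$ is normal in $\pi_n$. For the index, $\Lambda_n$ contains $\Gamma_n=\langle s_1^2,\dots,s_n^2\rangle$ with $\Lambda_n/\Gamma_n$ generated by the image of $s_n$; since $\Gamma_n\cap A_n=\langle s_n^2\rangle$ by the bottom row of \eqref{SES1}, we have $s_n\notin\Gamma_n$, whence $[\Lambda_n:\Gamma_n]=2$ and therefore $[\pi_n:\Lambda_n]=[\pi_n:\Gamma_n]/2=2^{n-1}$ using Lemma~\ref{normal}.

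The heart of the argument is nilpotency, and the key point --- which is exactly what distinguishes $\Lambda_n$ from an arbitrary finite-index subgroup --- is that $A_n$ is central in $\Lambda_n$. Indeed, for $j<n$ the relation $s_js_ns_j^{-1}=s_n^{\e_{jn}}$ of Lemma~\ref{presentation} yields $s_j^2s_ns_j^{-2}=s_n^{\e_{jn}^2}=s_n$, since $\e_{jn}=\pm1$; hence every generator $s_j^2$ of $\Lambda_n$ commutes with $s_n$, and $A_n=\langle s_n\rangle$ lies in the center of $\Lambda_n$. I would then invoke Lemma~\ref{virt nilp} at height $n-1$, by which $\Gamma_{n-1}$ is nilpotent of rank $n-1$ with a central series $\Gamma_{n-1}=G^1\supset\cdots\supset G^m=\{1\}$ having infinite cyclic quotients. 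Pulling this series back through $p$ gives $\Lambda_n=p^{-1}(G^1)\supset\cdots\supset p^{-1}(G^m)=A_n$, whose successive quotients coincide with those of $\Gamma_{n-1}$ and remain central because $p\bigl([\Lambda_n,p^{-1}(G^i)]\bigr)=[\Gamma_{n-1},G^i]\subseteq G^{i+1}$. Appending $A_n\supset\{1\}$ keeps the series central, as $A_n$ is central, and this last quotient is infinite cyclic. The resulting central series of $\Lambda_n$ has $(n-1)+1=n$ infinite cyclic quotients, so $\Lambda_n$ is nilpotent of rank $n$.

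The main obstacle --- or rather the decisive observation --- is the centrality of $A_n$: the identity $s_j^2s_ns_j^{-2}=s_n$ is what makes the extension central and hence preserves nilpotency, whereas replacing the generator $s_n$ by an odd power would destroy it. Everything else is a routine diagram chase parallel to the proofs of Lemmas~\ref{normal} and~\ref{virt nilp}.
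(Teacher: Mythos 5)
Your proof is correct, and its skeleton matches the paper's up to the nilpotency step. The paper likewise identifies $\Lambda_n$ with the pullback $p^{-1}(\Gamma_{n-1})$ along \eqref{SES} and derives normality from the normality of $\Gamma_{n-1}$ in $\pi_{n-1}$, exactly as you do. Where you diverge is in proving nilpotency: the paper works with the \emph{other} extension $1\to\Gamma_n\to\Lambda_n\to\bbz_2\to1$, notes from $s_is_ns_i^{-1}=s_n^{\e_{in}}$ that $s_i^2s_ns_i^{-2}=s_n$, so the coset representative $s_n$ of the $\bbz_2$-quotient is central in $\Lambda_n$, and concludes nilpotency of $\Lambda_n$ from that of $\Gamma_n$ (Lemma~\ref{virt nilp}). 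You instead use the extension $1\to A_n\to\Lambda_n\to\Gamma_{n-1}\to1$, observe via the same key computation that the kernel $A_n$ is central in $\Lambda_n$, and pull back the central series of $\Gamma_{n-1}$ provided by Lemma~\ref{virt nilp} at height $n-1$ --- in effect transplanting the technique of the proof of Lemma~\ref{virt nilp} itself to $\Lambda_n$. Both routes hinge on the identity $s_j^2s_ns_j^{-2}=s_n$, and your centrality check ($p\bigl([\Lambda_n,p^{-1}(G^i)]\bigr)=[\Gamma_{n-1},G^i]\subseteq G^{i+1}$, plus $[\Lambda_n,A_n]=1$ for the last step) is sound. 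Your version buys two things the paper leaves implicit: the explicit central series with $n$ infinite cyclic quotients proves the rank-$n$ assertion directly, and your index computation ($s_n\notin\Gamma_n$ since $\Gamma_n\cap A_n=\langle s_n^2\rangle$ by \eqref{SES1}, hence $[\Lambda_n:\Gamma_n]=2$ and $[\pi_n:\Lambda_n]=2^n/2$ by Lemma~\ref{normal}) verifies the index claim, which the paper would instead read off from $\pi_n/\Lambda_n\cong\pi_{n-1}/\Gamma_{n-1}$. The paper's finish is shorter but tacitly uses the general fact that a group generated by a nilpotent normal subgroup together with a central element is nilpotent; your construction avoids appealing to that fact by exhibiting the central series outright.
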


\begin{proof}
Under the short exact sequence $1\to\langle s_n\rangle\to\pi_n\to\pi_{n-1}\to1$, we take the pullback of the subgroup $\Gamma_{n-1}$ of $\pi_{n-1}$. Then we obtain the short exact sequence  $1\to\langle s_n\rangle\to\Lambda_n\to\Gamma_{n-1}\to1$. Since $\Gamma_{n-1}$ is normal in $\pi_{n-1}$, it follows that $\Lambda_n$ is a normal subgroup of $\pi_n$.

On the other hand, $\Lambda_n$ fits in the following short exact sequence $1\to\Gamma_n\to\Lambda_n\to\bbz_2\to1$. Since $s_is_ns_i^{-1}=s_n^{\e_{in}}$, we have $s_i^2s_ns_i^{-2}=s_n$ and so the extension is central. Hence since $\Gamma_n$ is nilpotent, we see that $\Lambda_n$ is nilpotent.
\end{proof}

Now we are in a position to prove our first main theorem stated in the Introduction.

\begin{Thm}\label{infra-nil}
The total space $X_n$ of an iterated $S^1$-bundle of height $n$ is homeomorphic to an infra-nilmanifold.  In fact, some $2^{n-1}$-cover of $X_n$ is homeomorphic to a nilmanifold.
\end{Thm}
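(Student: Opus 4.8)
The plan is to combine the group-theoretic structure already established in the earlier lemmas with two standard inputs: the realization of torsion-free virtually nilpotent groups by infra-nilmanifolds, and the topological rigidity of such manifolds. The starting observation is that $X_n$ is a closed aspherical manifold, hence a $K(\pi_n,1)$ with $\pi_n=\pi_1(X_n)$, so its homeomorphism type is governed by $\pi_n$. By Lemma~\ref{virt nilp}, $\pi_n$ is a finitely generated torsion-free virtually nilpotent group, and by Lemma~\ref{Lambda_n} it contains the nilpotent normal subgroup $\Lambda_n$ of index $2^{n-1}$.

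First I would realize $\pi_n$ as the fundamental group of an infra-nilmanifold. By Mal'cev's theorem the finitely generated torsion-free nilpotent group $\Lambda_n$ is a lattice in a uniquely determined simply connected nilpotent Lie group $N$, so that $N/\Lambda_n$ is a nilmanifold. The finite quotient $F=\pi_n/\Lambda_n$ then acts on $\Lambda_n$, and by the generalized Bieberbach (Auslander) theorem the extension $1\to\Lambda_n\to\pi_n\to F\to1$ can be realized so that $\pi_n$ embeds as a torsion-free discrete cocompact subgroup of $N\rtimes K$ for a compact subgroup $K\subset\Aut(N)$. This produces an infra-nilmanifold $M$ with $\pi_1(M)\cong\pi_n$.

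Next I would upgrade the isomorphism of fundamental groups to a homeomorphism. Both $X_n$ and $M$ are closed aspherical manifolds with isomorphic virtually nilpotent fundamental groups, hence homotopy equivalent; invoking the topological rigidity available for infra-nilmanifolds (the Borel-type rigidity valid for virtually nilpotent fundamental groups) yields a homeomorphism $X_n\cong M$, which proves the first assertion. For the covering statement, the subgroup $\Lambda_n$, being normal of index $2^{n-1}$, determines a regular $2^{n-1}$-fold cover $\widetilde X_n\to X_n$, and $\widetilde X_n$ is again a closed aspherical manifold with $\pi_1(\widetilde X_n)\cong\Lambda_n$. Applying Mal'cev and rigidity once more identifies $\widetilde X_n$ with the nilmanifold $N/\Lambda_n$.

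The main obstacle is the rigidity step rather than the algebra: the algebraic scaffolding, namely virtual nilpotency and the explicit index-$2^{n-1}$ nilpotent subgroup $\Lambda_n$, is already in hand from the earlier lemmas, so the substance lies in justifying that an arbitrary closed aspherical manifold carrying this fundamental group must in fact be homeomorphic to the model infra-nilmanifold. This is precisely where topological rigidity for virtually nilpotent groups is required, and in citing such a rigidity theorem one must be attentive to any low-dimensional exceptions.
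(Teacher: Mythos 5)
Your overall strategy is the same as the paper's: use the algebraic input (Lemmas~\ref{virt nilp} and~\ref{Lambda_n}: $\pi_n$ is torsion-free and virtually nilpotent, with the nilpotent normal subgroup $\Lambda_n$ of index $2^{n-1}$) to realize $\pi_n$ as the fundamental group of a model infra-nilmanifold $M$, then upgrade the resulting homotopy equivalence of closed aspherical manifolds to a homeomorphism by topological rigidity. (A small attribution point: the realization step is exactly \cite[Corollary~3.2.1]{KLR}, i.e.\ the Seifert construction of Kamishima--Lee--Raymond, rather than an ``Auslander'' theorem, but the mathematical content of your Mal'cev-plus-generalized-Bieberbach sketch is the same.) The genuine gap is in the rigidity step: the theorem you are implicitly invoking, Farrell--Hsiang \cite[Theorem~6.3]{FH}, is proved only for dimensions $n\ne 3,4$, and your proposal merely flags that ``one must be attentive to any low-dimensional exceptions'' without supplying an argument for them. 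These are not ignorable corner cases; in particular $n=3$ is precisely the dimension on which the classification in Section~\ref{sect5} rests, so as written the proof is incomplete exactly where the paper needs it.

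The paper closes the two exceptional cases as follows. For $n=4$ it does not compare $X_4$ with a model at all: by Farrell--Jones \cite[Corollary~2.21]{FJ}, a closed aspherical manifold whose fundamental group is virtually nilpotent itself admits an infra-nil structure, so $X_4$ is directly an infra-nilmanifold. For $n=3$, where surgery-theoretic rigidity is unavailable, the paper switches to $3$-manifold topology: $X_3$ and the model $X$ are Seifert manifolds (all $3$-dimensional infra-nilmanifolds are Seifert, and $X_3$ fibers over the torus or Klein bottle), both are sufficiently large \cite[Proposition~2]{H73}, and the Waldhausen--Heil rigidity theorems \cite{Wald}, \cite[Theorem~A]{H69} promote the homotopy equivalence to a homeomorphism. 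Note also that your treatment of the $2^{n-1}$-fold cover, which identifies it with $N/\Lambda_n$ by a second application of rigidity, inherits the same $n=3,4$ caveat, so it too needs these supplementary arguments (or the observation that the cover of the already-identified infra-nilmanifold corresponding to $\Lambda_n$ is a nilmanifold, as in the paper's proof).
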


\begin{proof}
Let $X_n$ be the total space of an iterated $S^1$-bundle and let $\pi_n$ be its fundamental group as before.
By \cite[Corollary~3.2.1]{KLR}, there is an infra-nilmanifold $X$ whose fundamental group is isomorphic to $\pi_n$. Therefore, two aspherical manifolds $X_n$ and $X$ are homotopic. By \cite[Theorem~6.3]{FH}, $X_n$ and $X$ are homeomorphic except possibly for $n=3,4$.

Since $X_4$ is aspherical and $\pi_4$ is virtually nilpotent, $X_4$ has an infra-nil structure by \cite[Corollary~2.21]{FJ}. (In fact, this is true for all $n\ne3$. See also F. Quinn's Math Review of the paper \cite{FH}.) Namely, $X_4$ is homeomorphic to an infra-nilmanifold.

It is well known that all $3$-dimensional infra-nilmanifolds are Seifert manifolds. It is evident that the Seifert manifolds $X_3$ and $X$ are sufficiently large, see \cite[Proposition~2]{H73}. By works of Waldhausen \cite{Wald} and Heil \cite[Theorem~A]{H69}, $X_3$ is homeomorphic to $X$.

By Lemmas~\ref{normal} and~\ref{Lambda_n}, $\pi_n$ has a normal nilpotent subgroup $\Lambda_n$ of index $2^{n-1}$. The covering space associated with the nilpotent group $\Lambda_n$ is a $2^{n-1}$-cover of $X_n$ and it is homeomorphic to a nilmanifold.
\end{proof}

\begin{Rmk}
The closed nilmanifolds are precisely the total spaces of iterated \emph{principal} $S^1$-bundles up to homeomorphism as remarked in the Introduction.
\end{Rmk}

We conclude this section with the following lemma.

\begin{Lemma}\label{Bieberbach}
$\pi_n$ is isomorphic to a Bieberbach group $($in other words, $X_n$ is homeomorphic to a flat Riemannian manifold$)$ if and only if $\Gamma_n$ is isomorphic to $\bbz^n$.
\end{Lemma}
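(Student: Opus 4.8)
The plan is to treat this as a purely group-theoretic statement about $\pi_n$, since the equivalence between ``$X_n$ is homeomorphic to a flat Riemannian manifold'' and ``$\pi_n$ is isomorphic to a Bieberbach group'' follows from asphericity together with the rigidity already invoked in the proof of Theorem~\ref{infra-nil}. The characterization I would use is the Bieberbach--Zassenhaus one: a group is (isomorphic to) an $n$-dimensional Bieberbach group exactly when it is torsion-free and contains a normal, self-centralizing, free abelian subgroup of rank $n$ and finite index. Recall from Lemmas~\ref{normal} and~\ref{virt nilp} that $\Gamma_n$ is a torsion-free nilpotent normal subgroup of $\pi_n$ of rank $n$ and index $2^n$, and that $\pi_n$ is torsion-free. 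The single technical ingredient I would isolate first is the following standard consequence of Mal'cev theory: a finitely generated torsion-free nilpotent group of rank $n$ that contains a subgroup isomorphic to $\bbz^n$ of finite index is itself isomorphic to $\bbz^n$. Indeed, a finite-index subgroup has the same real Mal'cev completion, so that completion is the abelian group $\bbr^n$, which forces the ambient group to be abelian, hence free abelian of rank $n$.

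For the direction $\Gamma_n\cong\bbz^n\Rightarrow\pi_n$ Bieberbach, I would pass from $\Gamma_n$ to its centralizer $M=C_{\pi_n}(\Gamma_n)$. Since $\Gamma_n$ is normal, $M$ is normal, and since $\pi_n/\Gamma_n$ is finite, $\Gamma_n\cong\bbz^n$ is a finite-index central subgroup of $M$; Schur's theorem then gives that the commutator subgroup of $M$ is finite, and torsion-freeness of $\pi_n$ forces it to be trivial, so $M$ is abelian. Being finitely generated torsion-free abelian and containing $\bbz^n$ of finite index, $M\cong\bbz^n$. Moreover $M$ is self-centralizing, because $C_{\pi_n}(M)\subseteq C_{\pi_n}(\Gamma_n)=M$ while $M$ abelian gives $M\subseteq C_{\pi_n}(M)$. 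Thus $M$ is a normal, self-centralizing copy of $\bbz^n$ of finite index, and the Bieberbach--Zassenhaus criterion together with torsion-freeness identifies $\pi_n$ with a Bieberbach group.

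For the converse, suppose $\pi_n$ is an $n$-dimensional Bieberbach group with translation lattice $L\cong\bbz^n$, the unique maximal normal abelian subgroup, which is self-centralizing and of finite index. Then $\Gamma_n\cap L$ has finite index in $\Gamma_n$ (as $L$ has finite index in $\pi_n$) and is free abelian of rank $n$, being a finite-index, hence full-rank, subgroup of $L\cong\bbz^n$. Applying the Mal'cev fact isolated above to the torsion-free nilpotent group $\Gamma_n$ of rank $n$, which contains the copy $\Gamma_n\cap L\cong\bbz^n$ of finite index, yields $\Gamma_n\cong\bbz^n$.

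I expect the main obstacle to be the first direction. While the slogan ``virtually $\bbz^n$ and torsion-free implies Bieberbach'' is morally clear, making it precise requires producing a genuinely self-centralizing lattice rather than using $\Gamma_n$ itself, which need not be maximal abelian; this is exactly what the centralizer-plus-Schur step accomplishes. The converse is comparatively routine once the Mal'cev commensurability fact is in hand, the only point of care being to confirm that $\Gamma_n\cap L$ has full rank $n$, so that the nilpotent group $\Gamma_n$ is forced to collapse to $\bbz^n$.
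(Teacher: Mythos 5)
Your proof is correct, but it follows a genuinely different route from the paper's, in both directions. For the ``only if'' direction the paper argues geometrically: if $\pi_n$ is a Bieberbach group then $\bbr^n/\Gamma_n$ is a compact flat (hence nonpositively curved) manifold with nilpotent fundamental group $\Gamma_n$ (Lemma~\ref{virt nilp}), and the theorem of Gromoll--Wolf \cite{GW} and Yau \cite{Y} then forces $\Gamma_n$ to be abelian, hence $\bbz^n$. You replace this differential-geometric input with Mal'cev theory: the first Bieberbach theorem gives the translation lattice $L\cong\bbz^n$ of finite index, $\Gamma_n\cap L$ is a finite-index copy of $\bbz^n$ inside $\Gamma_n$, and commensurable finitely generated torsion-free nilpotent groups share a Mal'cev completion, which here is $\bbr^n$, so $\Gamma_n$ is abelian. (This step could be made even more elementary via the fact that commuting powers imply commuting elements in a torsion-free nilpotent group.) For the ``if'' direction the paper simply declares it clear, whereas you supply the standard Zassenhaus-criterion argument in full: pass to $M=C_{\pi_n}(\Gamma_n)$, use Schur's theorem plus torsion-freeness of $\pi_n$ to see $M\cong\bbz^n$ is normal and self-centralizing, and conclude. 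The trade-off is clear: the paper's proof is one line modulo a deep geometric theorem (and the acknowledgement notes it was simplified to this form), while yours is purely group-theoretic and self-contained, at the cost of invoking Mal'cev completions and spelling out the Bieberbach--Zassenhaus characterization; both correctly use the normality, finite index, rank, and torsion-freeness facts established in Lemmas~\ref{normal} and~\ref{virt nilp}.
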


\begin{proof}
The if part is clear.  Suppose that $\pi_n$ is a Bieberbach group.  Then $\bbr^n/\pi_n$ is a flat Riemannian manifold, so is its finite cover $\bbr^n/\Gamma_n$.  On the other hand, it is known by Gromoll-Wolf \cite{GW} and Yau \cite{Y} that if the fundamental group of a compact nonpositively curved manifold is nilpotent, then it is abelian.  Therefore, $\Gamma_n$ is isomorphic to $\bbz^n$.
\end{proof}

\section{Iterated $S^1$-bundles of height $3$}\label{sect5}

In this section we classify the 3-dimensional total spaces obtained as iterated $S^1$-bundles of height $3$ up to homeomorphism (equivalently up to diffeomorphism because diffeomorphism classification is the same as homeomorphism classification in dimension 3).  This classification reduces to the classification of isomorphism classes of their fundamental groups by Theorem~\ref{infra-nil}.

\subsection{Isomorphism classes of $\pi_3$}
In the 3-dimensional case, by Lemma~\ref{presentation}, the fundamental group $\pi_3$ of the total space of an iterated $S^1$-bundle of height $3$ is generated by $s_1,s_2, s_3$ with relations
\begin{equation} \label{-1}
s_{1}s_{2}s_{1}^{-1}=s_3^{a}s_{2}^{\e}, \quad s_{1}s_3s_{1}^{-1}=s_3^{\e_{1}},\quad s_{2}s_3s_{2}^{-1}=s_3^{\e_{2}}
\end{equation}
where $a\in\bbz$ and $\e, \e_1,\e_2\in \{\pm 1\}$.  We shall denote the group $\pi_3$ with the relation \eqref{-1} by $\Pi(a,\e,\e_1,\e_2)$.

\begin{Lemma} \label{3-flat}
$\Pi(a,\e,\e_1,\e_2)$ is a Bieberbach group if and only if $(\e+\e_1)(\e_2+1)a=0$.
\end{Lemma}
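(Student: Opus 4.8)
The plan is to reduce the claim, via Lemma~\ref{Bieberbach}, to a statement about the subgroup $\Gamma_3=\langle s_1^2,s_2^2,s_3^2\rangle$, and then to pin down the relevant condition by a single commutator computation. By Lemma~\ref{Bieberbach}, $\Pi(a,\e,\e_1,\e_2)$ is a Bieberbach group if and only if $\Gamma_3\cong\bbz^3$. Now Lemma~\ref{virt nilp} tells us that $\Gamma_3$ is a finitely generated torsion-free nilpotent group of rank $3$, generated by $s_1^2,s_2^2,s_3^2$. A finitely generated torsion-free abelian group of rank $3$ is $\bbz^3$, so $\Gamma_3\cong\bbz^3$ if and only if $\Gamma_3$ is abelian, i.e.\ if and only if the three generators commute pairwise. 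Thus the whole problem becomes: for which parameters do $s_1^2$, $s_2^2$, $s_3^2$ commute?

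The first observation is that two of the three commutators are automatic. From the relation $s_1s_3s_1^{-1}=s_3^{\e_1}$ in \eqref{-1} and the fact that $\e_1^2=1$, I would compute $s_1^2 s_3 s_1^{-2}=s_1 s_3^{\e_1} s_1^{-1}=s_3^{\e_1^2}=s_3$, so $s_1^2$ commutes with $s_3$ and hence with $s_3^2$; the same argument applied to $s_2s_3s_2^{-1}=s_3^{\e_2}$ shows $s_2^2$ commutes with $s_3^2$. Consequently $\Gamma_3$ is abelian precisely when the single commutator $[s_1^2,s_2^2]$ is trivial, and everything reduces to computing $s_1^2 s_2^2 s_1^{-2}$.

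This computation is the heart of the proof. Using $s_1s_2s_1^{-1}=s_3^a s_2^{\e}$ together with the identity $s_2^{\e}s_3^{a}=s_3^{a\e_2}s_2^{\e}$ (which follows from the third relation in \eqref{-1}, uniformly in $\e=\pm1$), I would first obtain $s_1 s_2^2 s_1^{-1}=s_3^{a(1+\e_2)}s_2^{2\e}$, and then conjugate once more by $s_1$, using $s_1 s_3 s_1^{-1}=s_3^{\e_1}$ and the fact that $s_2^2$ commutes with $s_3$, to arrive at
\[
s_1^2 s_2^2 s_1^{-2}=s_3^{(\e+\e_1)(\e_2+1)a}\, s_2^2 .
\]
Hence $[s_1^2,s_2^2]=1$ if and only if $(\e+\e_1)(\e_2+1)a=0$. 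Combined with the two automatic commutation relations above, this shows that $\Gamma_3$ is abelian, equivalently $\Gamma_3\cong\bbz^3$, exactly under this condition, which by Lemma~\ref{Bieberbach} is equivalent to $\Pi(a,\e,\e_1,\e_2)$ being a Bieberbach group.

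The only real work is the displayed commutator computation, and its main subtlety is bookkeeping of signs across the two cases $\e=+1$ and $\e=-1$. The point I would want to verify carefully is that the exponent of $s_3$ collapses to the single expression $(\e+\e_1)(\e_2+1)a$ in both cases (for $\e=1$ it emerges as $(\e_1+1)(1+\e_2)a$ and for $\e=-1$ as $(\e_1-1)(1+\e_2)a$, which agree with the stated formula). Everything else is a direct consequence of the relations \eqref{-1} and the earlier lemmas, so I expect no essential obstacle beyond this routine but sign-sensitive calculation.
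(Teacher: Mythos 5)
Your proposal is correct and follows essentially the same route as the paper: both reduce via Lemma~\ref{Bieberbach} to the pairwise commutation of $s_1^2,s_2^2,s_3^2$, observe that the relations $s_is_3s_i^{-1}=s_3^{\e_i}$ make $s_3$ commute with $s_1^2$ and $s_2^2$ automatically, and then determine $[s_1^2,s_2^2]$ by a sign-sensitive computation whose exponent of $s_3$ is $(\e_1+1)(\e_2+1)a$ when $\e=1$ and $(\e_1-1)(\e_2+1)a$ when $\e=-1$, exactly as in the paper. Your only deviations are cosmetic improvements: you make explicit the step that an abelian, torsion-free, rank-$3$ group is $\bbz^3$, and you package the two $\e$-cases through the uniform identity $s_1s_2^2s_1^{-1}=s_3^{a(1+\e_2)}s_2^{2\e}$ rather than expanding $s_1^2s_2^2$ separately in each case.
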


\begin{proof}
By Lemma~\ref{Bieberbach}, $\Pi(a,\e,\e_1,\e_2)$ is a Bieberbach group if and only if $s_i^2s_j^2=s_j^2s_i^2$ for $1\le i<j\le 3$.  The latter two identities in \eqref{-1} imply that $s_3$ commutes with $s_1^2$ and $s_2^2$.  Therefore it suffices to show that $s_1^2s_2^2=s_2^2s_1^2$ if and only if $(\e+\e_1)(\e_2+1)a=0$.
We note that the latter two identities in \eqref{-1} imply
\begin{equation} \label{2}
s_is_3^b=s_3^{\e_i b}s_i \quad\text{for $i=1,2$ and $b\in \bbz$}.
\end{equation}
We distinguish two cases according to the value of $\e$.

The case where $\e=1$.  In this case $s_1s_2=s_3^as_2s_1$ by the first identity in \eqref{-1}. Using this together with \eqref{2}, we have
\[
\begin{split}
s_1^2s_2^2&=s_1(s_1s_2)s_2=s_1(s_3^as_2s_1)s_2=s_3^{\e_1 a}(s_1s_2)(s_1s_2)\\
&=s_3^{\e_1 a}(s_3^as_2s_1)(s_3^as_2s_1)=s_3^{\e_1 a+a+\e_1\e_2 a}s_2(s_1s_2)s_1\\
&=s_3^{\e_1 a+a+\e_1\e_2 a}s_2(s_3^as_2s_1)s_1=s_3^{\e_1 a+a+\e_1\e_2 a+\e_2 a}s_2^2s_1^2.
\end{split}
\]
Therefore $s_1^2s_2^2=s_2^2s_1^2$ if and only if the exponent of $s_3$ in the last term above is zero.  This is equivalent to the assertion in the lemma because $\e=1$.

The case where $\e=-1$.  In this case $s_1s_2=s_3^as_2^{-1}s_1$ by the first identity in \eqref{-1}. Moreover, by taking inverse at the both sides of the first identity in \eqref{-1} and using \eqref{2}, we obtain $s_1s_2^{-1}=s_3^{-\e_2 a}s_2s_1$.  Using these two identities together with \eqref{2}, we have
\[
\begin{split}
s_1^2s_2^2&=s_1(s_1s_2)s_1=s_1(s_3^as_2^{-1}s_1)s_2=s_3^{\e_1 a}(s_1s_2^{-1})(s_1s_2)\\
&=s_3^{\e_1 a}(s_3^{-\e_2 a}s_2s_1)(s_3^as_2^{-1}s_1)=s_3^{\e_1 a-\e_2 a+\e_1\e_2 a}s_2(s_1s_2^{-1})s_1\\
&=s_3^{\e_1 a-\e_2 a+\e_1\e_2 a}s_2(s_3^{-\e_2 a}s_2s_1)s_1=s_3^{\e_1 a-\e_2 a+\e_1\e_2 a-a}s_2^2s_1^2.
\end{split}
\]
Therefore $s_1^2s_2^2=s_2^2s_1^2$ if and only if the exponent of $s_3$ in the last term above is zero.  This is equivalent to the assertion in the lemma because $\e=-1$.
\end{proof}

Lemma~\ref{3-flat} implies that when $(\e,\e_1,\e_2)=(1,1,1)$ or $(-1,-1,1)$, $\Pi(a,\e,\e_1,\e_2)$ is a Bieberbach group if and only if $a=0$.
This condition that $(\e,\e_1,\e_2)=(1,1,1)$ or $(-1,-1,1)$ appears from another viewpoint as is seen in (the proof of) the following lemma.

\begin{Lemma} \label{lifting}
Unless $(\e,\e_1,\e_2)=(1,1,1)$ or $(-1,-1,1)$,
\[
\Pi(a,\e,\e_1,\e_2)\cong\begin{cases} \Pi(0,\e,\e_1,\e_2) &\quad\text{if $a$ is even},\\
\Pi(1,\e,\e_1,\e_2) &\quad\text{if $a$ is odd}.\end{cases}
\]
\end{Lemma}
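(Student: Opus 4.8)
The plan is to show that when $(\e,\e_1,\e_2)\neq(1,1,1),(-1,-1,1)$, the integer parameter $a$ can be adjusted modulo $2$ without changing the isomorphism type of $\Pi(a,\e,\e_1,\e_2)$. The natural mechanism is to produce an explicit automorphism-like change of generators. Specifically, I would look for a substitution of the form $s_1\mapsto s_1 s_3^{k}$ (or $s_2\mapsto s_2 s_3^{k}$) for a suitable integer $k$, check that it preserves the last two relations of \eqref{-1}, and compute how it transforms the first relation $s_1 s_2 s_1^{-1}=s_3^a s_2^\e$. The expectation is that such a substitution shifts the exponent $a$ by an even amount whose range, as $k$ varies over $\bbz$, is all of $2\bbz$ precisely when the hypothesis on $(\e,\e_1,\e_2)$ holds; this would collapse every even $a$ to $0$ and every odd $a$ to $1$.

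To carry this out, I would first use relation \eqref{2}, namely $s_i s_3^b = s_3^{\e_i b} s_i$, to conjugate. Replacing the generator $s_1$ by $t_1 := s_3^{k}s_1$ (equivalently conjugating or twisting by a power of $s_3$), I compute $t_1 s_2 t_1^{-1}$ and express the result as $s_3^{a'}s_2^\e$ for the new exponent $a'$. The commutation rules from \eqref{2} together with the relation $s_2 s_3^b = s_3^{\e_2 b}s_2$ govern how the power of $s_3$ migrates past $s_1$ and $s_2$, and a short computation should give $a' = a + (\text{linear expression in }k)$ where the coefficient of $k$ is a fixed combination such as $\e_1-1$, $\e_2-1$, or $\e\e_2 -1$ depending on which generator is twisted. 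The key is that under the stated hypothesis at least one of these coefficients equals $\pm 2$, so that as $k$ ranges over $\bbz$ the exponent $a'$ ranges over a full residue class $a + 2\bbz$.

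The main obstacle will be the bookkeeping of signs across the four sign-patterns $(\e,\e_1,\e_2)$ that remain after excluding $(1,1,1)$ and $(-1,-1,1)$. I would organize the verification by cases on $\e$, mirroring the structure already used in the proof of Lemma~\ref{3-flat}: for $\e=1$ the first relation reads $s_1 s_2 = s_3^a s_2 s_1$, while for $\e=-1$ it reads $s_1 s_2 = s_3^a s_2^{-1}s_1$ together with the derived identity $s_1 s_2^{-1}=s_3^{-\e_2 a}s_2 s_1$. In each case I must identify a twist (by a power of $s_3$ applied to $s_1$ or to $s_2$) whose effect on $a$ is a nonzero even shift; the exclusion of $(1,1,1)$ and $(-1,-1,1)$ should be exactly the condition guaranteeing that some available coefficient is $\pm2$ rather than $0$, which is why those two patterns are genuinely exceptional (they are the flat cases of Lemma~\ref{3-flat} where $a$ is a true invariant).

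Finally, once the appropriate generator-twist $s_1 \mapsto s_3^{k}s_1$ (or the analogous one on $s_2$) is shown to induce a well-defined automorphism of the free group respecting all three relations of \eqref{-1} and sending $\Pi(a,\e,\e_1,\e_2)$ to $\Pi(a',\e,\e_1,\e_2)$, I would choose $k$ to make $a'\in\{0,1\}$ according to the parity of $a$, yielding the claimed isomorphism. I expect the verification that the map descends to a homomorphism (and is invertible, hence an isomorphism) to be routine once the transformation of the first relation is computed correctly; the delicate point remains pinning down the sign coefficient in each case.
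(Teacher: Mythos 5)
Your proposal is correct and is essentially the paper's proof: the paper makes exactly this change of lifts, $s_1\mapsto s_3^{-b}s_1$, $s_2\mapsto s_3^{-c}s_2$ (twisting both at once), and computes the new exponent as $a+(\e_2-1)b-(\e_1-1)c$ when $\e=1$ and $a+(\e_2-1)b-(\e_1+\e_2)c$ when $\e=-1$, so the shift is always even and vanishes identically precisely for $(\e,\e_1,\e_2)=(1,1,1)$ and $(-1,-1,1)$. Your one-generator-at-a-time variant also goes through, since in every non-excluded sign pattern at least one of these coefficients equals $\pm2$.
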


\begin{proof}
Changing the lift of $s_1$ and $s_2$, we may replace
$$
s_1\mapsto s_3^{-b}s_1,\ s_2\mapsto s_3^{-c}s_2,\ s_3\mapsto s_3
$$
where $b$ and $c$ can be any integers.  Setting
\[
t_1=s_3^{-b}s_1,\ t_2=s_3^{-c}s_2,\ t_3=s_3,
\]
the second and third identities of \eqref{-1} remain unchanged with $s$ replaced by $t$ but the first one turns into
\begin{equation} \label{0}
(t_3^bt_1)(t_3^ct_2)(t_3^bt_1)^{-1}=t_3^a(t_3^ct_2)^\e.
\end{equation}
The left hand side of \eqref{0} reduces to
\[
t_3^{b+\e_1 c-\e_2 b}t_1t_2t_1^{-1}
\]
while the right hand side of \eqref{0} reduces to
\[
\begin{cases}
t_3^{a+c}t_2 \quad&\text{when $\e=1$},\\
t_3^{a-\e_2 c}t_2^{-1} \quad&\text{when $\e=-1$}.
\end{cases}
\]
Therefore the first identity in \eqref{-1} turns into
\[
t_1t_2t_1^{-1}=\begin{cases} t_3^{a+(\e_2-1)b-(\e_1-1)c}t_2 \quad&\text{when $\e=1$},\\
t_3^{a+(\e_2-1)b-(\e_1+\e_2)c}t_2^{-1} \quad&\text{when $\e=-1$}.
\end{cases}
\]
This implies the lemma.
\end{proof}

There are more isomorphisms among groups $\Pi(a,\e,\e_1,\e_2)$.

\begin{Lemma} \label{4 iso}
The following isomorphisms hold:
\begin{enumerate}
\item[$(1)$] $\Pi(a,\e,\e_1,\e_2)\cong \Pi(-a,\e,\e_1,\e_2)$.
\item[$(2)$] $\Pi(a,\e,\e_1,\e_2)\cong\Pi(a,\e,\e_1\e_2,\e_2)$.
\item[$(3)$] $\Pi(a,1,\e_1,\e_2)\cong\Pi(a,1,\e_2,\e_1)$.
\item[$(4)$] $\Pi(a,\e,-\e,1)\cong\Pi(a,-\e,\e,1)$.
\end{enumerate}
\end{Lemma}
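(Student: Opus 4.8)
My plan is to prove each of the four isomorphisms by a change of generators that is an automorphism of the free group $F=F\langle s_1,s_2,s_3\rangle$; for such a substitution it suffices to check that it carries the three defining relators \eqref{-1} of one group into the normal closure of the defining relators of the other, after which the inverse automorphism gives the reverse inclusion and the induced map of quotients is an isomorphism. I write $t_1,t_2,t_3$ for the generators of the target group and describe each substitution by the images of $s_1,s_2,s_3$.

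For (1) I would take $s_1\mapsto t_1$, $s_2\mapsto t_2$, $s_3\mapsto t_3^{-1}$: inverting $s_3$ negates the exponent $a$ in the first relation, while $s_1s_3s_1^{-1}=s_3^{\e_1}$ and $s_2s_3s_2^{-1}=s_3^{\e_2}$ survive unchanged because they are homogeneous in $s_3$. For (2) I would replace $s_1$ by $s_1s_2$, i.e. $s_1\mapsto t_1t_2$, $s_2\mapsto t_2$, $s_3\mapsto t_3$; conjugating $s_3$ by $t_1t_2$ composes the two actions and converts $\e_1$ into $\e_1\e_2$, while $(t_1t_2)t_2(t_1t_2)^{-1}=t_1t_2t_1^{-1}$ preserves the first relation. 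For (3), where $\e=1$ and the first relation reads $s_1s_2s_1^{-1}=s_3^as_2$, the swap $s_1\mapsto t_2$, $s_2\mapsto t_1$, $s_3\mapsto t_3$ interchanges the roles of $\e_1$ and $\e_2$ and, upon rewriting $t_2t_1t_2^{-1}=t_3^at_1$, lands in $\Pi(-a,1,\e_2,\e_1)$; composing with (1) restores $a$, giving $\Pi(a,1,\e_1,\e_2)\cong\Pi(-a,1,\e_2,\e_1)\cong\Pi(a,1,\e_2,\e_1)$.

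The main obstacle is (4), where $\e$ itself changes sign, and here keeping $s_3$ as the fiber generator cannot work. In the target $\Pi(a,-\e,\e,1)$ one has $t_1t_3t_1^{-1}=t_3^{\e}$ and $t_2$ commuting with $t_3$, so conjugation by any lift of a generator of the quotient $\bbz=\Pi/\langle t_2,t_3\rangle$ scales $t_3$ by $\e$; the source relation $s_1s_3s_1^{-1}=s_3^{-\e}$ would instead require the scalar $-\e$, so no isomorphism can send $s_3\mapsto t_3^{\pm1}$. Instead I would use that both groups have $\e_2=1$, so $\langle s_2,s_3\rangle\cong\bbz^2$ is normal with quotient $\langle s_1\rangle\cong\bbz$ and $\Pi(a,\e,\e_1,1)\cong\bbz^2\rtimes_M\bbz$, where $s_1$ acts on $\langle s_2,s_3\rangle$ by $M=\left(\begin{smallmatrix}\e&0\\a&\e_1\end{smallmatrix}\right)$. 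Then (4) becomes the claim that $M_L=\left(\begin{smallmatrix}\e&0\\a&-\e\end{smallmatrix}\right)$ and $M_R=\left(\begin{smallmatrix}-\e&0\\a&\e\end{smallmatrix}\right)$ are conjugate in $\GL(2,\bbz)$ (both being involutions of trace $0$ and determinant $-1$), since a conjugacy $P^{-1}M_RP=M_L$ induces the isomorphism $\bbz^2\rtimes_{M_L}\bbz\cong\bbz^2\rtimes_{M_R}\bbz$ sending $s_1\mapsto t_1$ and the $\bbz^2$-part through $P$.

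To produce such a $P$ I would first invoke Lemma~\ref{lifting}: the triples $(\e,-\e,1)$ and $(-\e,\e,1)$ never equal the excluded $(1,1,1)$ or $(-1,-1,1)$, so it suffices to treat $a\in\{0,1\}$. For $a=0$ the swap $s_1\mapsto t_1$, $s_2\mapsto t_3$, $s_3\mapsto t_2$ conjugates $\mathrm{diag}(\e,-\e)$ to $\mathrm{diag}(-\e,\e)$ and the three relations check at once. For $a=1$ the substitution $s_1\mapsto t_1$, $s_2\mapsto t_2$, $s_3\mapsto t_2^{-2\e}t_3$ corresponds to $P=\left(\begin{smallmatrix}1&-2\e\\0&1\end{smallmatrix}\right)$; verifying the critical relation $s_1s_3s_1^{-1}=s_3^{-\e}$ in this case, using $t_1t_2t_1^{-1}=t_3t_2^{-\e}$ and the commuting of $t_2,t_3$, is the one genuinely computational point. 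Everything else is bookkeeping of how the substitutions act on the exponents $a,\e,\e_1,\e_2$.
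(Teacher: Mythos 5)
Your proposal is correct and follows essentially the same route as the paper: (1) and (2) use the identical substitutions, your (3) is the paper's single map $s_1\to s_2,\ s_2\to s_1,\ s_3\to s_3^{-1}$ merely factored through (1), and in (4) you make the same reduction to $a\in\{0,1\}$ via Lemma~\ref{lifting} and then give the same two isomorphisms, your $a=1$ substitution $s_3\mapsto t_2^{-2\e}t_3$ being the paper's $s_3\mapsto s_3s_2^{2\e}$ read in the opposite direction (i.e.\ with $\e$ replaced by $-\e$, using that $t_2$ and $t_3$ commute). One small caution that does not affect validity: trace $0$ and determinant $-1$ do not by themselves force $\GL(2,\bbz)$-conjugacy of involutions --- $\mathrm{diag}(1,-1)$ and $\left(\begin{smallmatrix}0&1\\1&0\end{smallmatrix}\right)$ lie in distinct conjugacy classes, which is exactly why the parity of $a$ survives and $\frakB_1\not\cong\frakB_2$ --- but you rightly do not rely on that parenthetical, since you exhibit the conjugator $P$ explicitly in each reduced case.
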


\begin{proof}
The following isomorphisms are desired ones for the first three cases:

(1)  $s_1\to s_1,\ s_2\to s_2,\ s_3\to s_3^{-1}$.

(2)  $s_1\to s_1s_2,\ s_2\to s_2,\ s_3\to s_3$.

(3)  $s_1\to s_2,\ s_2\to s_1,\ s_3\to s_3^{-1}$.

\noindent
It would be obvious that the first two above are the desired isomorphisms.  We shall check it for (3).  We set $t_1=s_2$, $t_2=s_1$ and $t_3=s_3^{-1}$.  Then
\[
\begin{split}
t_1t_2t_1^{-1}&=s_2s_1s_2^{-1}=s_3^{-a}s_1=t_3^at_2,\\
t_1t_3t_1^{-1}&=s_2s_3^{-1}s_2^{-1}=s_3^{-\e_2}=t_3^{\e_2},\\
t_2t_3t_2^{-1}&=s_1s_3^{-1}s_1^{-1}=s_3^{-\e_1}=t_3^{\e_1},
\end{split}
\]
and this proves the isomorphism (3) in the lemma.

The proof of (4) is as follows.  By Lemma~\ref{lifting} we may assume that $a=0$ or $1$.  Then

\quad $s_1\to s_1,\ s_2\to s_3,\ s_3\to s_2$\quad when $a=0$,

\quad $s_1\to s_1,\ s_2\to s_2,\ s_3\to s_3s_2^{2\e}$\quad when $a=1$.

\noindent
are the desired isomorphisms.  The check is left to the reader.
\end{proof}

There are ten diffeomorphism classes of closed flat $3$-dimensional Riemannian manifolds; six orientable ones $\frakG_1,\frakG_2,\frakG_3,\frakG_4,\frakG_5,\frakG_6$ and four non-orientable ones $\frakB_1, \frakB_2,\frakB_3,\frakB_4$, see \cite[Theorems~3.5.5 and 3.5.9]{W}.  It is known that $\frakG_1,\frakG_2,\frakB_1,\frakB_3$ appear as real Bott manifolds (\cite{KM}, \cite{KN}).

\begin{Prop} \label{isoclass}
The isomorphism classes of $\pi_3=\Pi(a,\e,\e_1,\e_2)$ are classified into the following three types:
\begin{enumerate}
\item[$(1)$] Six Bieberbach groups:
\begin{center}
\begin{tabular}{||c|c|c||}
\hline%
$\Pi(a,\e,\e_1,\e_2)$& $a$ & $(\e,\e_1,\e_2)$  \\%
\hline\hline%
$\frakG_1$ & $0$ & $(1,1,1)$ \\%
\cline{1-3}%
$\frakG_2$ & $0$ & $(-1,-1,1)$ \\
\cline{1-3}%
$\frakB_1$ & {\rm even} & $(1,1,-1),(1,-1,1), (1,-1,-1),(-1,1,1)$ \\%
\cline{1-3}%
$\frakB_2$ & {\rm odd} & $(1,1,-1),(1,-1,1), (1,-1,-1),(-1,1,1)$\\%
\cline{1-3}%
$\frakB_3$ & {\rm even} & $(-1,1,-1), (-1,-1,-1)$ \\%
\cline{1-3}%
$\frakB_4$ & {\rm odd} & $(-1,1,-1), (-1,-1,-1)$ \\%
\hline %
\end{tabular}
\end{center}
\item[$(2)$] An infinite family of nilpotent groups
$$\Pi(a,1,1,1)\cong \Pi(-a,1,1,1) \text{ with $a\not=0$}.$$
\item[$(3)$] An infinite family of virtually nilpotent groups
$$\Pi(a,-1,-1,1)\cong \Pi(-a,-1,-1,1) \text{ with $a\not=0$}.$$
\end{enumerate}
\end{Prop}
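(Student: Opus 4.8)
The plan is to split the proof into two separate tasks: the isomorphism assertions made inside each row and each family, which follow formally from Lemmas~\ref{lifting}, \ref{4 iso} and~\ref{3-flat}, and the pairwise non-isomorphism of the resulting classes, which requires computing algebraic invariants.

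\emph{Isomorphisms.} The two patterns $(\e,\e_1,\e_2)=(1,1,1)$ and $(-1,-1,1)$ are exactly those excluded from Lemma~\ref{lifting}: for them $a$ is a genuine parameter and Lemma~\ref{4 iso}(1) gives only $\Pi(a,\cdot)\cong\Pi(-a,\cdot)$, producing the families (2) and (3) for $a\ne0$ and, at $a=0$, the groups $\frakG_1=\Pi(0,1,1,1)$ and $\frakG_2=\Pi(0,-1,-1,1)$; by Lemma~\ref{3-flat} these are the only Bieberbach members since $(\e+\e_1)(\e_2+1)\ne0$ forces $a=0$. For the remaining six patterns Lemma~\ref{lifting} reduces $a$ to its parity, and Lemma~\ref{4 iso}(2)--(4) amalgamate the patterns: applying (2), then (3), then (4) one gets
\[
(1,-1,-1)\cong(1,1,-1)\cong(1,-1,1)\cong(-1,1,1),
\]
while (2) gives $(-1,1,-1)\cong(-1,-1,-1)$. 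Each of these two orbits splits according to the parity of $a$, yielding $\frakB_1,\frakB_2$ and $\frakB_3,\frakB_4$; all six patterns satisfy $(\e+\e_1)(\e_2+1)=0$, so by Lemma~\ref{3-flat} every member is a Bieberbach group.

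\emph{Elementary invariants.} Abelianizing \eqref{-1} turns the relations into $(1-\e)\bar s_2=a\bar s_3$, $(1-\e_1)\bar s_3=0$, $(1-\e_2)\bar s_3=0$, from which
\[
\frakG_1\colon\bbz^3,\quad \frakG_2,\frakB_3\colon\bbz\oplus\bbz_2\oplus\bbz_2,\quad \frakB_1\colon\bbz^2\oplus\bbz_2,\quad \frakB_2\colon\bbz^2,\quad \frakB_4\colon\bbz\oplus\bbz_4,
\]
family (2) gives $\bbz^2\oplus\bbz_{|a|}$ (whose torsion recovers $|a|$) and family (3) gives $\bbz\oplus\bbz_4$ or $\bbz\oplus\bbz_2\oplus\bbz_2$ according to the parity of $a$. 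This already separates most classes, and the residual coincidences are removed by three further invariants. Nilpotency isolates family (2) (where $s_3$ is central, so the group is nilpotent) from the non-nilpotent Bieberbach groups that happen to share its abelianization, while $\frakG_1=\bbz^3$ is separated from family (2) by its free rank. Being a Bieberbach group --- equivalently $\Gamma_3\cong\bbz^3$ by Lemma~\ref{Bieberbach} --- separates the flat groups $\frakG_2,\frakB_3,\frakB_4$ from the non-flat members of family (3) carrying the same abelianization. Finally orientability, read off group-theoretically from $H_3(\pi_3;\bbz)\cong\bbz$ versus $0$, distinguishes the orientable $\frakG_2$ from the non-orientable $\frakB_3$; the identification with the specific flat manifolds $\frakG_i,\frakB_j$ is then obtained by matching these homology and orientation data against the list of ten flat $3$-manifolds in \cite{W}.

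\emph{The main obstacle.} The delicate point is distinguishing the members of family (3) from one another: for fixed parity of $a$ they all have the same abelianization, and writing $\Pi(a,-1,-1,1)=\bbz^2\rtimes_A\bbz$ with $\bbz^2=\langle s_2,s_3\rangle$ and $A=\left(\begin{smallmatrix}-1&0\\a&-1\end{smallmatrix}\right)$, one checks that every lower-central quotient $\gamma_k/\gamma_{k+1}$ of $\pi_3$ is isomorphic to $\mathrm{coker}(A-I)$ and hence sees only the parity of $a$; so no invariant computed directly from $\pi_3$ detects $|a|$. The remedy is to pass to a characteristic finite-index subgroup. Since $A$ is not unipotent the group is not nilpotent, but $A^2$ is unipotent, so the index-$2$ subgroup $\langle s_1^2,s_2,s_3\rangle\cong\bbz^2\rtimes_{A^2}\bbz$ is nilpotent and therefore equals the Fitting subgroup; being characteristic, its abelianization $\bbz^2\oplus\bbz_{2|a|}$ is an isomorphism invariant of $\pi_3$ and recovers $|a|$. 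This both separates the members of family (3) from each other and, since its torsion is nontrivial, from $\frakB_4$ (whose Fitting subgroup is the translation lattice $\bbz^3$). Combined with $\Pi(a,\cdot)\cong\Pi(-a,\cdot)$, family (3) is thus parametrized exactly by $|a|\ge1$, and likewise family (2) by $|a|\ge1$, completing the classification.
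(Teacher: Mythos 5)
Your proposal is correct, and for parts (2) and (3) it follows the paper's proof almost step for step: the paper likewise amalgamates the rows using Lemmas~\ref{lifting} and~\ref{4 iso}, recovers $|a|$ in family (2) from the torsion of the abelianization, and separates the members of family (3) by the very same index-two subgroup $\langle s_1^2,s_2,s_3\rangle$, which it identifies with $\Pi(2a,1,1,1)$ and asserts to be the unique maximal nilpotent normal subgroup; your version of this step (unipotency of $A^2$ versus non-unipotency of $A$, together with the fact that a product of two nilpotent normal subgroups is nilpotent, so a nilpotent normal subgroup not contained in the index-two subgroup would force the whole group to be nilpotent) actually supplies the justification that the paper leaves implicit. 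The genuine divergence is in part (1): the paper writes down explicit changes of generators realizing Wolf's presentations of $\frakG_1,\frakG_2,\frakB_1,\frakB_2,\frakB_3,\frakB_4$, whereas you match computed invariants (abelianization, flatness via Lemma~\ref{3-flat}, orientability) against the list in \cite{W}. Your route buys a simultaneous proof that all listed classes are pairwise non-isomorphic---something the paper delegates silently to Wolf's classification---but it carries two unpaid debts that the explicit isomorphisms avoid: you assert that $H_3(\pi_3;\bbz)$ separates $\frakG_2$ from $\frakB_3$ but never evaluate it (the quickest check is the determinant of the holonomy acting on the translation lattice $\langle s_1^2,s_2,s_3\rangle$, which is $+1$ for $\Pi(0,-1,-1,1)$ and $-1$ for a generator in the $\frakB_3$ rows), and you must know the abelianizations of all ten flat $3$-manifold groups to conclude that the pair $(H_1,\text{orientability})$ is a complete invariant for them; both are routine and available in \cite{W}, so these are gaps of execution rather than of method. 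One sentence should be struck: ``no invariant computed directly from $\pi_3$ detects $|a|$'' is false as stated, since the Fitting subgroup you invoke two lines later is precisely such an invariant---what you mean is that the lower central quotients alone do not suffice.
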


\begin{proof}
(1)  First we note that groups $\Pi(a,\e,\e_1,\e_2)$'s for values of $(a,\e,\e_1,\e_2)$ in a same row in the table above are isomorphic to each other by Lemmas~\ref{lifting} and~\ref{4 iso}.
Three dimensional Bieberbach groups are classified and presented in \cite[Theorems~3.5.5 and 3.5.9]{W} with generators and relations, and we shall identify our groups $\Pi(a,\e,\e_1,\e_2)$ with them.

$(\frakG_1)$. Clearly $\Pi(0,1,1,1)$ is $\bbz^3$ and isomorphic to $\frakG_1$.

$(\frak G_2)$. Taking $\alpha=s_1, t_2=s_2$ and $t_3=s_3$, we see that $\Pi(0,-1,-1,1)$ is isomorphic to $\frakG_2$.

$(\frakB_1)$. We take
$
e=s_1,\ t_1=s_1^2,\ t_2=s_3,\ t_3=s_2^{-1}.
$
Then $\{e,t_1,t_2,t_3\}$ generates  $\Pi(0,-1,1,1)$ and satisfies
\[
e^2=t_1,\ e t_2e^{-1}=t_2,\ e t_3e^{-1}=t_3^{-1}.
\]
This shows that $\Pi(0,-1,1,1)$ is isomorphic to $\frakB_1$.

$(\frakB_2)$. We take
$
e=s_1,\ t_1=s_1^2,\ t_2=s_1^{-2}s_3,\ t_3=s_2.
$
Then $\{e,t_1,t_2,t_3\}$ generates $\Pi(1,-1,1,1)$ and satisfies
\[
e^2=t_1,\ e t_2e^{-1}=t_2,\ e t_3e^{-1}=t_1t_2t_3^{-1}.
\]
This shows that $\Pi(1,-1,1,1)$ is isomorphic to  $\frakB_2$.

$(\frakB_3)$.  We take
$
\alpha=s_1,\ e=s_2,\ t_1=s_1^2,\ t_2=s_2^2,\ t_3=s_3.
$
Then $\{\alpha,e,t_1,t_2,t_3\}$ generates $\Pi(0,-1,-1,-1)$ and satisfies
\begin{align*}
&\alpha^2=t_1,\ \alpha t_2\alpha^{-1}=t_2^{-1},\ \alpha t_3\alpha^{-1}=t_3^{-1},\\
&e^2=t_2,\ e t_1e^{-1}=t_1,\ e t_3e^{-1}=t_3^{-1},\ e\alpha e^{-1}=t_2\alpha.
\end{align*}
This shows that $\Pi(0,-1,-1,-1)$ is isomorphic to $\frakB_3$.

$(\frakB_4)$.  We take
$
\alpha=s_1,\ e=s_2,\ t_1=s_1^2,\ t_2=s_2^2,\ t_3=s_3^{-1}.
$
Then $\{\alpha,e,t_1,t_2,t_3\}$ generates $\Pi(1,-1,-1,-1)$ and satisfies
\begin{align*}
&\alpha^2=t_1,\ \alpha t_2\alpha^{-1}=t_2^{-1},\ \alpha t_3\alpha^{-1}=t_3^{-1},\\
&e^2=t_2,\ e t_1e^{-1}=t_1,\ e t_3 e^{-1}=t_3^{-1},\ e\alpha e^{-1}=t_2t_3\alpha.
\end{align*}
This shows that $\Pi(1,-1,-1,-1)$ is isomorphic to $\frakB_4$.

(2) The isomorphism in (2) of the proposition follows from Lemma~\ref{4 iso} (1).  Since the first homology group of $\Pi(a,1,1,1)$ is a cyclic group of order $|a|$, $\Pi(a,1,1,1)$ is isomorphic to $\Pi(b,1,1,1)$ if and only if $|a|=|b|$. For $\Pi(a,1,1,1)$ with $a\not=0$,
\[
[s_1,s_2]=s_3^a,\quad  [s_1,s_3]=[s_2,s_3]=1,
 \]
so the group has a central series
\[
\Pi(a,1,1,1)=\langle s_1,s_2,s_3\rangle \supset \langle s_3\rangle
\]
and hence is nilpotent.

(3) The isomorphism in (3) of the proposition also follows from Lemma~\ref{4 iso} (1).  For $\Pi(a,-1,-1,1)$ with $a\not=0$, we have
\[
[s_1^2,s_2]=s_3^{-2a},\quad  [s_1^2,s_3]=[s_2,s_3]=1.
\]
So the subgroup $H_a=\langle s_1^2,s_2,s_3\rangle$ of $\Pi(a,-1,-1,1)$ with $a\not=0$ is isomorphic to the nilpotent group $\Pi(2a,1,1,1)$ in (2).  Moreover, since the subgroup $H_a$ is of index 2, it is normal and the quotient group $\Pi(2a,1,1,1)/H_a$ is an order two cyclic group.  Therefore $H_a$ is the unique maximal nilpotent normal subgroup of $\Pi(2a,1,1,1)$ and $\Pi(2a,1,1,1)$ is virtually nilpotent.  Finally, if $\Pi(a,-1,-1,1)$ is isomorphic to $\Pi(b,-1,-1,1)$, then their maximal normal nilpotent subgroups $H_a$ and $H_b$ are isomorphic; so $|a|=|b|$ by (2) above.
\end{proof}

\begin{Rmk}
One can see that $\Pi(a,-1,-1,-1)$ with $a\not=0$ is isomorphic to an almost Bieberbach group (in short, an AB-group) of Seifert bundle type $3$ in \cite[Proposition~6.1]{DIKL}, or $\pi_3$ (the subscript $3$ also stands for Seifert bundle type $3$) in the list of \cite[p.~\!799]{CS}. Since the unique maximal normal nilpotent subgroup $H_a$ of $\Pi(a,-1,-1,-1)$ is isomorphic to $\Pi(2a,0,0,0)$, our class $\Pi(a,-1,-1,-1)$ consists of all of the infinitely many non-isomorphic AB-groups of type $3$.
\end{Rmk}

\subsection{Realization}
We shall observe that all the isomorphism classes of the groups $\Pi(a,\e,\e_1,\e_2)$ in Proposition~\ref{isoclass} can be realized by iterated $S^1$-bundles of height 3.

By Theorem~\ref{infra-nil}, the total space of an iterated $S^1$-bundle of height $3$ is a $3$-dimensional infra-nilmanifold.  The $3$-dimensional infra-nilmanifolds are well understood. In fact, these are ten flat Riemannian manifolds mentioned before or infra-nilmanifolds covered by the simply connected $3$-dimensional nilpotent Lie group $\Nil$, called the Heisenberg group,
\begin{equation} \label{Nil}
\Nil=\left\{\left[\begin{matrix}1&x&z\\0&1&y\\0&0&1\end{matrix}\right]\Big|\ x,y,z\in\bbr\right\}.
\end{equation}

As mentioned before, $\frakG_1,\frakG_2,\frakB_1,\frakB_3$ appear as real Bott manifolds (\cite{KM}, \cite{KN}), and in addition to them, $\frakB_2$ and $\frakB_4$ appear as iterated $S^1$-bundles as is shown in the following example.

\begin{Example}[Flat Riemannian manifolds of types $\frakB_2$ and $\frakB_4$]
\begin{equation*}
\begin{split}
s_1(x,y,z)&=(x+\frac{1}{2},\e y,-z+\frac{1}{4}), \\
s_2(x,y,z)&=(x,y+\frac{1}{2},-z), \\
s_3(x,y,z)&=(x,y,z+\frac{1}{2}).
\end{split}
\end{equation*}
The group generated by $s_1, s_2,s_3$ acts freely on $\bbr^3$ and has relations
\[
s_1s_2s_1^{-1}=s_3s_2^\e,\quad  s_{1}s_3s_{1}^{-1}=s_3^{-1},\quad  s_{2}s_3s_{2}^{-1}=s_3^{-1}.
\]
The subgroup generated by $s_1^2,s_2^2,s_3^2$ is the group $\bbz^3$ of translations.
The natural projections $\bbr^3\to\bbr^2\to\bbr$ induce an iterated $S^1$-bundle of height 3.
\end{Example}

The group $\Pi(a,1,1,1)$ in Proposition~\ref{isoclass} (2) can be realized by an iterated $S^1$-bundle as follows.

\begin{Example}[Nilmanifolds] \label{exam:nil}
It is well known that a lattice (i.e., a torsion free discrete cocompact subgroup) of $\Nil$ is isomorphic to
\begin{equation} \label{nilrel}
\Pi_a:=\Pi(a,1,1,1)=\left\langle s_1,s_2,s_3\mid [s_1,s_2]=s_3^a,\ [s_1,s_3]=[s_2,s_3]=1\right\rangle
\end{equation}
for some $a\ne0$. This group is realized as a lattice of $\Nil$ if one takes
$$
s_1=\left[\begin{matrix}1&1&0\\0&1&0\\0&0&1\end{matrix}\right],\
s_2=\left[\begin{matrix}1&0&0\\0&1&1\\0&0&1\end{matrix}\right],\
s_3=\left[\begin{matrix}1&0&\frac{1}{a}\\0&1&0\\0&0&1\end{matrix}\right].
$$
Then the orbit space $\Nil/\Pi_a$ is a nilmanifold with $\Pi_a$ as the fundamental group.

The product of the matrix in \eqref{Nil} with $s_i$  $(i=1,2,3)$ from the left is respectively given by
\[
\left[\begin{matrix}1&x+1&z+y\\0&1&y\\0&0&1\end{matrix}\right],\quad
\left[\begin{matrix}1&x&z\\0&1&y+1\\0&0&1\end{matrix}\right],\quad
\left[\begin{matrix}1&x&z+\frac{1}{a}\\0&1&y\\0&0&1\end{matrix}\right].
\]
Therefore, if we identify the matrix in \eqref{Nil} with the point $(x,y,z)$ in $\bbr^3$, then the left multiplication by $s_i$ on $\Nil$ for $i=1,2,3$ can  respectively be identified with the following diffeomorphism of $\bbr^3$:
\begin{equation*} \label{nils}
\begin{split}
s_1(x,y,z)&=(x+1,y,z+y), \\
s_2(x,y,z)&=(x,y+1,z), \\
s_3(x,y,z)&=(x,y,z+\frac{1}{a}).
\end{split}
\end{equation*}
So, the natural projections $\bbr^3\to\bbr^2\to\bbr$ induce an iterated $S^1$-bundle of height 3:
\[
\Nil/\Pi_a \to \bbr^2/\bbz^2=(S^1)^2\to\bbr/\bbz=S^1.
\]
Note that $\Nil/\Pi_a\to (S^1)^2$ above is the unit sphere bundle of an oriented plane bundle over $(S^1)^2$ whose Euler class is $a$ times a generator of $H^2((S^1)^2;\bbz)$.
\end{Example}

It is well known that all $3$-dimensional infra-nilmanifolds $M$ covered by $\Nil$ are Seifert manifolds (see \cite{O}); namely, $M$ is a circle bundle over a $2$-dimensional orbifold with singularities. It is known \cite[Proposition~6.1]{DIKL} that there are fifteen classes of distinct closed $3$-dimensional manifolds $M$ with a $\Nil$-geometry up to Seifert local invariant.

It is known (cf. \cite{CS,DIKL}) that the group $\aut(\Nil)$ of automorphisms of $\Nil$ is isomorphic to $\bbr^2\rtimes\GL(2,\bbr)$. In fact, an element
$$
\left(\left[\begin{matrix}u\\v\end{matrix}\right],
\left[\begin{matrix}a&b\\c&d\end{matrix}\right]\right)\in\bbr^2\rtimes\GL(2,\bbr)
$$
acts on $\Nil$ as follows:
$$
\left[\begin{matrix}1&x&z\\0&1&y\\0&0&1\end{matrix}\right]
\longmapsto
\left[\begin{matrix}1&ax+by&z'\\0&1&cx+dy\\0&0&1\end{matrix}\right],
$$
where
$$
z'=(ad-bc)z+\frac{1}{2}(acx^2+2bcxy+bdy^2)-(au+cv)x-(bu+dv)y.
$$
An infra-nilmanifold of dimension 3 is an orbit space of $\Nil$ by a cocompact discrete subgroup of the affine group $\aff(\Nil)=\Nil\rtimes\aut(\Nil)$ of $\Nil$ acting on $\Nil$ freely.

\begin{Example}[Infra-nilmanifolds] \label{exam:infranil}
Let $a\not=0$ as before.
We consider affine diffeomorphisms $s_1,s_2,s_3$ in $\aff(\Nil)$ given as follows:
\begin{alignat}{2}
s_1&=\left(\left[\begin{matrix}1&\tfrac{1}{2}&0\\0&1&0\\0&0&1\end{matrix}\right],
\left(\left[\begin{matrix}0\\0\end{matrix}\right], \left[\begin{matrix}1&\hspace{8pt}0\\0&-1\end{matrix}\right]\right)\right), \notag\\
s_2&=\left(\left[\begin{matrix}1&0&0\\0&1&\frac{1}{2}\\0&0&1\end{matrix}\right],
\left(\left[\begin{matrix}0\\0\end{matrix}\right], \left[\begin{matrix}1&0\\0&1\end{matrix}\right]\right)\right), \notag\\
s_3&=\left(\left[\begin{matrix}1&0&-\frac{1}{2a}\\0&1&\hspace{8pt}0\\0&0&\hspace{8pt}1\end{matrix}\right],
\left(\left[\begin{matrix}0\\0\end{matrix}\right], \left[\begin{matrix}1&0\\0&1\end{matrix}\right]\right)\right).\notag
\end{alignat}
In other words, if we identify $\Nil$ with $\bbr^3$ as before, then the diffeomorphisms $s_1,s_2,s_3$ are described as follows:
\begin{equation*} \label{infranils}
\begin{split}
s_1(x,y,z)&=(x+\frac{1}{2},-y,-z-\frac{y}{2}), \\
s_2(x,y,z)&=(x,y+1,z), \\
s_3(x,y,z)&=(x,y,z-\frac{1}{2a}).
\end{split}
\end{equation*}
One can check that the group $\Del_a$ generated by $s_1,s_2,s_3$ has relations
\begin{equation} \label{infranilrel}
s_{1}s_{2}s_{1}^{-1}=s_3^{a}s_{2}^{-1}, \quad s_{1}s_3s_{1}^{-1}=s_3^{-1},\quad  s_{2}s_3s_{2}^{-1}=s_3
\end{equation}
and the action of $\Del_a$ on $\Nil$ is free.  The subgroup of $\Del_a$ generated by $s_1^2,s_2,s_3$ agrees with $\Pi_{-2a}$ in Example~\ref{exam:nil} and $\Nil/\Pi_{-2a}\to \Nil/\Del_{a}$ is a double covering.  Note that the natural projections $\bbr^3\to\bbr^2\to\bbr$ induce an iterated $S^1$-bundle of height 3 with $\Nil/\Del_{a}$ as the total space.
\end{Example}

We summarize what we have observed as follows.

\begin{Thm} \label{3-dim class}
The total spaces of iterated $S^1$-bundles of height 3 are classified into the following three types up to homeomorphism:
\begin{enumerate}
\item[$(1)$] Closed flat Riemannian manifolds of types $\frakG_1, \frakG_2, \frakB_1, \frakB_2, \frakB_3, \frakB_4$.
\item[$(2)$] Nilmanifolds $\Nil/\Pi_a$ in Example~\ref{exam:nil} parametrized by positive integers $a$.
\item[$(3)$] Infra-nilmanifolds $\Nil/\Del_a$ in Example~\ref{exam:infranil} parametrized by positive integers $a$.
\end{enumerate}
\end{Thm}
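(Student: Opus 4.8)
The plan is to assemble the three preceding ingredients---the rigidity statement of Theorem~\ref{infra-nil}, the group-theoretic classification of Proposition~\ref{isoclass}, and the explicit constructions of the realization subsection---into a single homeomorphism classification. The first step reduces the problem to fundamental groups. Every total space $X_3$ of an iterated $S^1$-bundle of height $3$ is a closed aspherical $3$-manifold, and the rigidity results of Waldhausen and Heil invoked in the proof of Theorem~\ref{infra-nil} show that two such manifolds are homeomorphic precisely when they are homotopy equivalent, hence precisely when their fundamental groups are isomorphic. Thus I would first record that the homeomorphism classification of the spaces $X_3$ coincides with the isomorphism classification of the groups $\pi_3=\Pi(a,\e,\e_1,\e_2)$.

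The second step is to invoke Proposition~\ref{isoclass}, which partitions these groups into exactly three families: the six Bieberbach groups $\frakG_1,\frakG_2,\frakB_1,\frakB_2,\frakB_3,\frakB_4$; the nonabelian nilpotent groups $\Pi_a=\Pi(a,1,1,1)$ with $a\neq0$; and the virtually nilpotent groups $\Pi(a,-1,-1,1)$ with $a\neq0$. I would then match each family with the corresponding geometric family in the statement. A Bieberbach group is by definition the fundamental group of a closed flat Riemannian manifold, so the first family yields exactly the six flat manifolds of type~(1). The isomorphisms $\Pi(a,1,1,1)\cong\Pi(-a,1,1,1)$ and $\Pi(a,-1,-1,1)\cong\Pi(-a,-1,-1,1)$ recorded in Proposition~\ref{isoclass} let me restrict the parameter $a$ to positive integers, while the distinctness for differing $|a|$ proved there (via the first homology group and the maximal nilpotent normal subgroup $H_a$) guarantees that these two families are genuinely infinite. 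Identifying these with the groups $\Pi_a$ of Example~\ref{exam:nil} and $\Del_a=\Pi(a,-1,-1,1)$ of Example~\ref{exam:infranil} gives types~(2) and~(3).

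The final step is realization: I must confirm that each isomorphism class above actually occurs as the fundamental group of an iterated $S^1$-bundle of height~$3$. For the flat manifolds, $\frakG_1,\frakG_2,\frakB_1,\frakB_3$ arise as real Bott manifolds, and $\frakB_2,\frakB_4$ arise from the explicit Euclidean motions in the preceding example; the nilmanifolds $\Nil/\Pi_a$ and the infra-nilmanifolds $\Nil/\Del_a$ are realized by the iterated $S^1$-bundles constructed in Examples~\ref{exam:nil} and~\ref{exam:infranil}. Combined with the reduction of the first step, this shows that the homeomorphism types of total spaces are exactly the three lists claimed.

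I do not expect a deep obstacle, since the substantive content lies in Theorem~\ref{infra-nil} and Proposition~\ref{isoclass}. The only point requiring care is the bookkeeping across the two identifications: verifying that the fundamental group $\Del_a$ of the infra-nilmanifold in Example~\ref{exam:infranil} is precisely the type-(3) group $\Pi(a,-1,-1,1)$, so that the relations \eqref{infranilrel} match \eqref{-1} with $(\e,\e_1,\e_2)=(-1,-1,1)$; and confirming that the three families are pairwise disjoint as abstract groups---the first being virtually abelian with $\Gamma_3\cong\bbz^3$ by Lemma~\ref{Bieberbach}, the second being nilpotent and nonabelian, and the third being virtually nilpotent but not virtually abelian.
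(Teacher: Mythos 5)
Your proposal is correct and follows essentially the same route as the paper, whose own ``proof'' is precisely the assembly you describe: the reduction to fundamental groups via Theorem~\ref{infra-nil} (with the Waldhausen--Heil rigidity handling dimension~3), the isomorphism classification of Proposition~\ref{isoclass}, and the realizations in Examples~\ref{exam:nil} and~\ref{exam:infranil} together with the flat examples. Your added bookkeeping---matching \eqref{infranilrel} with $(\e,\e_1,\e_2)=(-1,-1,1)$ and checking the three families are pairwise non-isomorphic---is sound and makes explicit what the paper leaves implicit.
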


\section{Flat Riemannian iterated $\bbr{P}^1$-bundles}  \label{sect6}

A real Bott manifold (see Example~\ref{real Bott}) is flat Riemannian although the total space of an iterated $\bbr{P}^1$-bundle is not necessarily flat Riemannian.  The purpose of this section is to show that real Bott manifolds are precisely flat Riemannian manifolds among the total spaces of iterated $\bbr{P}^1$-bundles.  In fact, we prove the following, which is essentially same as Theorem~\ref{main theo2} in the Introduction.

\begin{Thm} \label{theo}
Let $X_n$ be the total space of an iterated $\bbr{P}^1$-bundle of height $n$. If the fundamental group of $X_n$ is a Bieberbach group, then it is isomorphic to the fundamental group of a real Bott manifold.  $($This means that if $X_n$ is homeomorphic to a flat Riemannian manifold, then it is homeomorphic to a real Bott manifold.$)$
\end{Thm}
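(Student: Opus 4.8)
The plan is to prove the purely group-theoretic assertion that $\pi_1(X_n)=\pi_n$ is isomorphic to the fundamental group of a real Bott manifold, i.e.\ to a \emph{real Bott group} $\langle t_1,\dots,t_n\mid t_it_jt_i^{-1}=t_j^{\e_{ij}}\rangle$ as in Example~\ref{real Bott}; the homeomorphism statement in parentheses then follows from Theorem~\ref{infra-nil} and the realization of real Bott groups by real Bott manifolds. I would induct on the height $n$, the case $n=1$ ($\pi_1\cong\bbz$) being trivial. Throughout I keep the presentation of Lemma~\ref{presentation} and recall that, by Proposition~\ref{char of RP}, the iterated $\bbr{P}^1$-hypothesis is equivalent to all exponents $a^k_{i,j}$ being even, while by Lemma~\ref{Bieberbach} the Bieberbach hypothesis is equivalent to $\Gamma_n\cong\bbz^n$.

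For the inductive step I would descend to the base via the extension $1\to\langle s_n\rangle\to\pi_n\to\pi_{n-1}\to1$. The base is again an iterated $\bbr{P}^1$-bundle, and it is again flat: since the $n$ elements $s_1^2,\dots,s_n^2$ generate the rank-$n$ free abelian group $\Gamma_n$ they form a basis, so $\Gamma_{n-1}=\Gamma_n/\langle s_n^2\rangle\cong\bbz^{n-1}$, and Lemma~\ref{Bieberbach} makes $\pi_{n-1}$ Bieberbach. By induction $\pi_{n-1}$ is a real Bott group; I would in fact prove the slightly stronger \emph{flag-respecting} version, producing generators $\bar{t}_1,\dots,\bar{t}_{n-1}$ with $\bar{t}_i\in\langle\bar{s}_i,\dots,\bar{s}_{n-1}\rangle$ and pure-conjugation relations, so that the reduction is compatible with the filtration by the subgroups $\bar{\pi}_i$. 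Lifting these along $\pi_n\to\pi_{n-1}$ and setting $t_n=s_n$ yields generators $t_1,\dots,t_n$ of $\pi_n$ with $t_it_jt_i^{-1}=s_n^{c_{ij}}t_j^{\e_{ij}}$ for $1\le i<j\le n-1$ and $t_is_nt_i^{-1}=s_n^{\delta_i}$ with $\delta_i\in\{\pm1\}$. Everything thus reduces to removing the correction terms $c_{ij}$, i.e.\ to splitting this extension.

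The two hypotheses enter exactly as in the three-dimensional prototypes. Since $b_1(X_n;\bbz_2)=n$ is a homeomorphism invariant, abelianising the $t$-presentation modulo $2$ as in Proposition~\ref{char of RP} forces every $c_{ij}$ to be even. For the flatness condition I would compute the commutators $[t_i^2,t_j^2]$ in terms of $c_{ij},\delta_i,\delta_j,\e_{ij}$, exactly in the spirit of Lemma~\ref{3-flat}; demanding that the lattice $\langle t_1^2,\dots,t_{n-1}^2,s_n^2\rangle$ be abelian then forces $c_{ij}=0$ precisely in the two \emph{resistant} configurations $(\e_{ij},\delta_i,\delta_j)=(1,1,1)$ and $(-1,-1,1)$, the higher-dimensional avatars of the exceptional triples singled out in Lemmas~\ref{lifting} and~\ref{3-flat}. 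Finally, changing the lifts by $t_i\mapsto s_n^{m_i}t_i$ (the manoeuvre of Lemma~\ref{lifting}) alters $c_{ij}$ by $m_i(1-\delta_j)$ together with $-m_j(1-\delta_i)$ when $\e_{ij}=1$ and $m_j(\delta_i+\delta_j)$ when $\e_{ij}=-1$; in every non-resistant configuration at least one of these coefficients equals $\pm2$, leaving room to cancel the even number $c_{ij}$, whereas in the resistant configurations all coefficients vanish but there $c_{ij}$ is already $0$ by the commutator analysis.

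The hard part will be the \emph{simultaneous} removal of all $c_{ij}$: the shared variables $m_i$ couple the equations across different pairs, so the real content is that the single $2$-cocycle $(c_{ij})$ is a coboundary, equivalently that the splitting obstruction in $H^2(\pi_{n-1};\bbz_\phi)$—with the $\pm1$-action $\phi$ determined by the $\delta_i$—vanishes. I expect to dispatch this by recording the cocycle identities that the $c_{ij}$ satisfy (the relations-among-relations of the $t$-presentation) and feeding in both the evenness and the vanishing on resistant pairs, which together should permit solving for the $m_i$ one filtration step at a time. Once all $c_{ij}=0$, the generators $t_1,\dots,t_n$ display $\pi_n$ as a real Bott group, closing the induction and hence proving Theorem~\ref{theo}.
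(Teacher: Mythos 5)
Your proposal follows the paper's own proof in all essentials: induct on the height, view $\pi_n$ as an extension of the real Bott group $\pi_{n-1}$ by $\langle s_n\rangle$, use $b_1(X_n;\bbz_2)=n$ (Proposition~\ref{char of RP}) to force the correction exponents to be even, use commutativity of the squares inside $\Gamma_n\cong\bbz^n$ (the $n$-variable version of Lemma~\ref{3-flat}, which is the paper's Lemma~\ref{lemm:1}) to kill the corrections in the two resistant configurations, and change lifts $t_i\mapsto s_n^{m_i}t_i$ to kill the rest. Your identification of the resistant triples $(1,1,1)$, $(-1,-1,1)$ and of the $\pm2$ coefficients in all non-resistant configurations agrees with the paper's computations, and your explicit verification that the base is again Bieberbach, via $\Gamma_{n-1}=\Gamma_n/\langle s_n^2\rangle\cong\bbz^{n-1}$ and Lemma~\ref{Bieberbach}, supplies a point the paper leaves implicit.

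The one place you stop short of a proof is exactly the step you flag as hard, the simultaneous removal of all $c_{ij}$, and there the paper argues not by a cohomological vanishing but by two concrete consequences of your ``cocycle identities.'' Conjugating the relation $s_is_js_i^{-1}=s_n^{a_{ij}}s_j^{\e_{ij}}$ by $s_k$, once the rows $i>k$ are already clean, yields $(\e_j-1)a_{ki}=(\e_i-1)a_{kj}$ when $\e_{ij}=1$ and $(\e_j-1)a_{ki}=(\e_i+\e_j)a_{kj}$ when $\e_{ij}=-1$ (Lemma~\ref{lemm:2}); from these one deduces (Lemma~\ref{lemm:3}) that $(1)$ all $a_{kp}$ with $\e_p=-1$ coincide, so a \emph{single} shift $s_k\mapsto s_n^bs_k$ (coefficient $2$, usable by evenness) kills them all at once without disturbing the clean rows, and $(2)$ if $\e_p=1$ and $a_{kp}\ne0$ then $\e_j=1$ for all $j>k$ and $\e_{\ell p}=\e_\ell$ for $k<\ell<p$, so the individual shifts $s_p\mapsto s_n^{c_p}s_p$ leave the clean rows untouched, while the flatness identity $(\e_k+\e_{kp})(\e_p+1)a_{kp}=0$ forces $\e_k=-\e_{kp}$ and hence the coefficient $\e_k-\e_{kp}=\pm2$. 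A downward induction on $k$, seeded at $k=n-2$ by the height-$3$ analysis (Lemmas~\ref{3-flat} and~\ref{lifting}), then does exactly what you predicted ``one filtration step at a time.'' So your plan is the paper's proof in outline; to complete it you must prove these two structural lemmas, since the decoupling of the shared variables $m_i$ across pairs is precisely their content and does not follow from the per-pair $\pm2$ coefficients alone.
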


We consider the following setting:
\begin{equation} \label{1}
\begin{split}
s_is_js_i^{-1}&=s_n^{a_{ij}}s_j^{\e_{ij}} \quad \text{with $a_{ij}\in\bbz$, $\e_{ij}=\pm 1$ for $1\le i<j< n$},\\
s_is_ns_i^{-1}&=s_n^{\e_i} \quad\text{with $\e_i=\pm 1$ for $1\le i<n$}.
\end{split}
\end{equation}

\begin{Lemma} \label{lemm:1}
$s_i^2s_j^2=s_j^2s_i^2$ for $i<j$ if and only if $(\e_{i}+\e_{ij})(\e_{j}+1)a_{ij}=0$.
\end{Lemma}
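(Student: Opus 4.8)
The plan is to verify the commutator condition by direct computation, closely mirroring the two-case analysis already used in the proof of Lemma~\ref{3-flat}, since that lemma is precisely the three-dimensional instance $n=3$ of the present statement. Indeed, when $n=3$ we have $a_{12}=a$, $\e_{12}=\e$, $\e_1=\e_1$, and $\e_2=\e_2$, and the claimed condition $(\e_i+\e_{ij})(\e_j+1)a_{ij}=0$ becomes $(\e_1+\e)(\e_2+1)a=0$, matching Lemma~\ref{3-flat}. So I would simply carry out the same manipulation in the general setting \eqref{1}, keeping the indices $i<j<n$ fixed throughout.

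First I would record the consequences of the relations in \eqref{1} that allow me to move powers of $s_n$ past $s_i$ and $s_j$. From $s_is_ns_i^{-1}=s_n^{\e_i}$ I get $s_is_n^b=s_n^{\e_i b}s_i$ for all $b\in\bbz$, and likewise $s_js_n^b=s_n^{\e_j b}s_j$; these are the analogues of \eqref{2}. Then I would split into the two cases $\e_{ij}=1$ and $\e_{ij}=-1$. In the case $\e_{ij}=1$, the relation $s_is_js_i^{-1}=s_n^{a_{ij}}s_j$ gives $s_is_j=s_n^{a_{ij}}s_js_i$, and expanding $s_i^2s_j^2$ by repeatedly applying this identity together with the commutation rules for $s_n$ yields $s_i^2s_j^2=s_n^{(\e_i+1+\e_i\e_j+\e_j)a_{ij}}s_j^2s_i^2$. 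In the case $\e_{ij}=-1$, I would additionally need the inverted relation $s_is_j^{-1}=s_n^{-\e_j a_{ij}}s_js_i$ (obtained by inverting $s_is_js_i^{-1}=s_n^{a_{ij}}s_j^{-1}$ and using the $s_n$-commutation), and the analogous expansion produces an exponent $(\e_i-\e_j+\e_i\e_j-1)a_{ij}$.

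The key observation that unifies both cases is that the exponent of $s_n$ factors: in the case $\e_{ij}=1$ the exponent is $(\e_i+1)(\e_j+1)a_{ij}=(\e_i+\e_{ij})(\e_j+1)a_{ij}$, and in the case $\e_{ij}=-1$ the exponent is $(\e_i-1)(\e_j+1)a_{ij}=(\e_i+\e_{ij})(\e_j+1)a_{ij}$. Thus in both cases the exponent equals $(\e_i+\e_{ij})(\e_j+1)a_{ij}$, and since $s_i^2s_j^2=s_j^2s_i^2$ holds if and only if this exponent vanishes, the lemma follows. The main obstacle I anticipate is purely bookkeeping: correctly tracking the accumulated powers of $s_n$ as they are commuted past $s_i$ and $s_j$ (each pass contributing a factor $\e_i$ or $\e_j$), and being careful that $s_n$ is central with respect to neither $s_i$ nor $s_j$ individually unless $\e_i=1$ or $\e_j=1$. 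There is no conceptual difficulty beyond organizing this computation so that the common factored form $(\e_i+\e_{ij})(\e_j+1)a_{ij}$ emerges cleanly from both cases.
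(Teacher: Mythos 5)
Your proposal is correct and follows exactly the route the paper intends: the paper omits this proof with the remark that it is ``essentially the same as that in Lemma~\ref{3-flat},'' and you carry out precisely that two-case computation (for $\e_{ij}=1$ and $\e_{ij}=-1$) in the general setting \eqref{1}, with the exponents $(\e_i+1+\e_i\e_j+\e_j)a_{ij}$ and $(\e_i-\e_j+\e_i\e_j-1)a_{ij}$ and their common factorization $(\e_i+\e_{ij})(\e_j+1)a_{ij}$ all checking out. Your implicit use of the fact that $s_n$ has infinite order (so the commutation holds iff the exponent vanishes) matches the same implicit step in the paper's proof of Lemma~\ref{3-flat}.
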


\begin{proof}
The proof is essentially same as that in Lemma~\ref{3-flat}, so we omit it.
\end{proof}

\begin{Lemma} \label{lemm:2}
Fix $1\le k<n$ and suppose
\begin{equation} \label{3}
\text{$a_{ij}=0$ for all $i>k$}.
\end{equation}
Then, for $k<i<j<n$, we have
\begin{equation*}
\begin{split}
(\e_j-1)a_{ki}&=(\e_i-1)a_{kj} \quad\text{if $\e_{ij}=1$},\\
(\e_j-1)a_{ki}&=(\e_i+\e_j)a_{kj} \quad\text{if $\e_{ij}=-1$}.
\end{split}
\end{equation*}
\end{Lemma}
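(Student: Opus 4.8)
The plan is to apply the conjugation automorphism $c(s_k)$ to the relation $s_is_js_i^{-1}=s_j^{\e_{ij}}$ and to compare the two resulting normal forms. The point of departure is that for $k<i<j<n$ we have $i>k$, so the hypothesis \eqref{3} gives $a_{ij}=0$; hence the relation $s_is_js_i^{-1}=s_j^{\e_{ij}}$ holds \emph{exactly} in the group, with no $s_n$-correction term. Applying $c(s_k)$ to both sides and using multiplicativity $c(s_k)(uv)=c(s_k)(u)\,c(s_k)(v)$ reduces everything to the two substitutions $c(s_k)(s_i)=s_n^{a_{ki}}s_i^{\e_{ki}}$ and $c(s_k)(s_j)=s_n^{a_{kj}}s_j^{\e_{kj}}$ read off from \eqref{1}.

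First I would record the commutation rules coming from the second line of \eqref{1}. Since $\e_i=\pm1$, conjugation of $s_n^{b}$ by either $s_i$ or $s_i^{-1}$ yields $s_n^{\e_i b}$, and likewise conjugation by $s_j^{\pm1}$ yields $s_n^{\e_j b}$; similarly, because $a_{ij}=0$, one has $s_i^{\e}s_js_i^{-\e}=s_j^{\e_{ij}}$ for $\e=\pm1$, so $s_i^{\e_{ki}}s_j^{\e_{kj}}s_i^{-\e_{ki}}=s_j^{\e_{ij}\e_{kj}}$. Using these rules to push every occurrence of $s_n$ to the far left, I would reduce the left-hand side $c(s_k)(s_i)\,c(s_k)(s_j)\,c(s_k)(s_i)^{-1}$ to the normal form $s_n^{(1-\e_j)a_{ki}+\e_ia_{kj}}\,s_j^{\e_{ij}\e_{kj}}$.

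For the right-hand side I would compute $c(s_k)(s_j^{\e_{ij}})=(s_n^{a_{kj}}s_j^{\e_{kj}})^{\e_{ij}}$, which reduces to $s_n^{a_{kj}}s_j^{\e_{kj}}$ when $\e_{ij}=1$ and to $s_n^{-\e_ja_{kj}}s_j^{-\e_{kj}}$ when $\e_{ij}=-1$. In both cases the $s_j$-parts of the two sides agree automatically ($s_j^{\e_{kj}}$ resp.\ $s_j^{-\e_{kj}}$), so equating the two sides forces equality of the exponents of $s_n$. This gives $(1-\e_j)a_{ki}=(1-\e_i)a_{kj}$ when $\e_{ij}=1$ and $(1-\e_j)a_{ki}=-(\e_i+\e_j)a_{kj}$ when $\e_{ij}=-1$, which are precisely the two asserted identities after multiplying through by $-1$.

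The only point needing care — the main obstacle, though a mild one — is justifying that equal normal forms $s_n^{c}s_j^{d}=s_n^{c'}s_j^{d}$ really force $c=c'$. This follows from the short exact sequence $1\to\langle s_n\rangle\to\pi_n\to\pi_{n-1}\to1$: the images of the two expressions in $\pi_{n-1}$ already coincide, so the two elements differ by $s_n^{c-c'}\in\langle s_n\rangle$, and since $\langle s_n\rangle$ is infinite cyclic this forces $c=c'$. Apart from this, the argument is purely the bookkeeping of signs and is, as the authors note for the companion Lemma~\ref{lemm:1}, the same kind of computation carried out in Lemma~\ref{3-flat}.
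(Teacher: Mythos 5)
Your proposal is correct and follows essentially the same route as the paper: conjugate the relation $s_is_js_i^{-1}=s_j^{\e_{ij}}$ (valid without $s_n$-term since $a_{ij}=0$ by \eqref{3}) by $s_k$, reduce both sides to the normal form $s_n^{c}s_j^{d}$ using the commutation rules from \eqref{1}, and equate exponents of $s_n$; your computed exponents agree exactly with the paper's. The only additions are cosmetic: you treat the two cases $\e_{ij}=\pm1$ uniformly via $s_j^{\e_{ij}\e_{kj}}$, and you make explicit the (correct) justification, via $\langle s_n\rangle\cong\bbz$ in the short exact sequence, for why matching normal forms force equal $s_n$-exponents, a step the paper leaves implicit.
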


\begin{proof}
We conjugate the both sides of the former identity in \eqref{1} by $s_k$.  Then the left hand side turns into
\begin{equation} \label{4}
\begin{split}
s_k(s_is_js_i^{-1})s_k^{-1}&=(s_ks_is_k^{-1})(s_ks_js_k^{-1})(s_ks_i^{-1}s_k^{-1})\\
&=(s_n^{a_{ki}}s_i^{\e_{ki}})(s_n^{a_{kj}}s_j^{\e_{kj}})(s_n^{a_{ki}}s_i^{\e_{ki}})^{-1}\\
&=s_n^{a_{ki}+\e_i a_{kj}-\e_j a_{ki}}s_i^{\e_{ki}}s_j^{\e_{kj}}s_i^{-\e_{ki}}
\end{split}
\end{equation}
while since $a_{ij}=0$ for $i>k$ by assumption, the right hand side of \eqref{1} conjugated by $s_k$ turns into
\begin{equation} \label{5}
s_ks_j^{\e_{ij}}s_k^{-1}=\begin{cases} s_n^{a_{kj}}s_j^{\e_{kj}} \quad &\text{when $\e_{ij}=1$},\\
s_n^{-\e_ja_{kj}}s_j^{-\e_{kj}} \quad &\text{when $\e_{ij}=-1$}.
\end{cases}
\end{equation}
When $\e_{ij}=1$, $s_is_js_i^{-1}=s_j$ and hence $s_i^{\e_{ki}}s_j^{\e_{kj}}s_i^{-\e_{ki}}=s_j^{\e_{kj}}$.  Therefore, comparing exponents of $s_n$
at \eqref{4} and \eqref{5}, we obtain the former identity in the lemma. When $\e_{ij}=-1$, a similar argument yields the latter identity in the lemma.
\end{proof}

\begin{Lemma} \label{lemm:3}
Let \eqref{3} be satisfied and $k<i<j<n$ as in Lemma~\ref{lemm:2}.  Then the following hold.
\begin{enumerate}
\item[$(1)$] If $\e_i=\e_j=-1$, then $a_{ki}=a_{kj}$.
\item[$(2)$] If $\e_i=1$ and $a_{ki}\not=0$, then $\e_j=1$ and $\e_{\ell i}=\e_\ell$ for $k<\ell<i$.
\end{enumerate}
\end{Lemma}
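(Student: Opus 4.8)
The plan is to derive both statements purely from Lemma~\ref{lemm:2}, substituting suitable index pairs into its two identities and then exploiting the sign hypotheses on $\e_i,\e_j$ together with (for part (2)) the non-vanishing of $a_{ki}$ to pin down the remaining signs. No new geometric input is needed: once the two cases of Lemma~\ref{lemm:2} are written out, everything collapses to a short sign-chasing argument.

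For part (1), I would apply Lemma~\ref{lemm:2} to the pair $(i,j)$ and substitute $\e_i=\e_j=-1$. In the case $\e_{ij}=1$ the identity $(\e_j-1)a_{ki}=(\e_i-1)a_{kj}$ becomes $-2a_{ki}=-2a_{kj}$, and in the case $\e_{ij}=-1$ the identity $(\e_j-1)a_{ki}=(\e_i+\e_j)a_{kj}$ likewise becomes $-2a_{ki}=-2a_{kj}$; either way $a_{ki}=a_{kj}$. Thus (1) is immediate from the two cases of the lemma.

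For part (2), I would first establish $\e_j=1$ by feeding the pair $(i,j)$ into Lemma~\ref{lemm:2} under the assumption $\e_i=1$. When $\e_{ij}=1$ the right-hand side $(\e_i-1)a_{kj}$ vanishes, so $(\e_j-1)a_{ki}=0$, and since $a_{ki}\ne0$ this forces $\e_j=1$. When $\e_{ij}=-1$ we obtain $(\e_j-1)a_{ki}=(1+\e_j)a_{kj}$; the choice $\e_j=-1$ would make the left side $-2a_{ki}\ne0$ while the right side is $0$, a contradiction, so again $\e_j=1$. To obtain $\e_{\ell i}=\e_\ell$ for $k<\ell<i$, the key move is to apply Lemma~\ref{lemm:2} not to $(i,j)$ but to the pair $(\ell,i)$, that is, with the roles of the lemma's two free indices played by $\ell$ and $i$. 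Using $\e_i=1$, the left-hand sides $(\e_i-1)a_{k\ell}$ vanish in both cases, so the identities collapse to $(\e_\ell-1)a_{ki}=0$ when $\e_{\ell i}=1$ and to $(\e_\ell+1)a_{ki}=0$ when $\e_{\ell i}=-1$; since $a_{ki}\ne0$ these yield $\e_\ell=1=\e_{\ell i}$ and $\e_\ell=-1=\e_{\ell i}$ respectively. In either case $\e_{\ell i}=\e_\ell$, completing (2).

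The only point requiring care, and the nearest thing to an obstacle, is recognizing that the second assertion of (2) needs Lemma~\ref{lemm:2} applied to the pair $(\ell,i)$ rather than to the originally fixed pair $(i,j)$; one must check that the inequalities $k<\ell<i<n$ indeed permit the lemma to apply and that the standing hypothesis $a_{ij}=0$ for $i>k$ is unaffected by this re-indexing. Once that substitution is identified, the remainder is a direct computation with the two displayed identities.
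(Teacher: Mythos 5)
Your proof is correct and takes essentially the same route as the paper: part (1) and the assertion $\e_j=1$ in part (2) are the direct sign-substitutions into the two identities of Lemma~\ref{lemm:2} applied to the pair $(i,j)$ (which the paper dismisses as ``obvious''), and the second assertion of (2) is obtained, exactly as in the paper, by applying Lemma~\ref{lemm:2} with the index roles $i=\ell$, $j=i$ and using $\e_i=1$ together with $a_{ki}\neq 0$. The re-indexing concern you flag is indeed harmless, since hypothesis \eqref{3} depends only on the fixed $k$ and the inequalities $k<\ell<i<n$ place the pair $(\ell,i)$ squarely within the lemma's scope.
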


\begin{proof}
(1) This is obvious from Lemma~\ref{lemm:2}.

(2) The first assertion $\e_j=1$ is obvious from Lemma~\ref{lemm:2}.  To prove the latter assertion, we apply Lemma~\ref{lemm:2} with $i=\ell$ and $j=i$.  When $\e_{\ell i}=1$, we have $a_{k \ell}(\e_i-1)=a_{ki}(\e_\ell-1)$.  This implies $\e_\ell=1$ because $\e_i=1$ and $a_{ki}\not=0$ by assumption. When $\e_{\ell i}=-1$, we have  $a_{k\ell}(\e_i-1)=a_{ki}(\e_\ell+\e_i)$.  This implies $\e_\ell=-1$ because $\e_i=1$ and $a_{ki}\not=0$.  In any case, $\e_{\ell i}=\e_\ell$.
\end{proof}

\begin{proof}[{Proof of Theorem~\ref{theo}}]
It suffices to prove that $\pi_1(X_n)$ is generated by $s_1,\dots,s_n$ with relations of the form \eqref{1} with $a_{ij}=0$ because the fundamental group of a real Bott manifold has such a presentation, see Example~\ref{real Bott}. We prove this assertion by induction on $n$.  The assertion is clearly true when $n=2$. When $n=3$, $\pi_1(X_3)$ is of the form \eqref{-1}, that is
\[
s_{1}s_{2}s_{1}^{-1}=s_3^{a}s_{2}^{\e}, \quad s_{1}s_3s_{1}^{-1}=s_3^{\e_{1}},\quad s_{2}s_3s_{2}^{-1}=s_3^{\e_{2}}.
\]
Here $a=0$ when $(\e,\e_1,\e_2)=(1,1,1)$ or $(-1,-1,1)$ by Lemma~\ref{3-flat} and $a$ is even otherwise by Proposition~\ref{char of RP} (2).  Therefore one can assume $a=0$ by Lemma~\ref{lifting}, so the assertion is true when $n=3$.

Now we assume that the assertion is true for $\pi_1(X_{n-1})$ with some $n\ge 4$.  Then $\pi_1(X_n)$ is generated by $s_1,\dots,s_n$ with relations of the form \eqref{1}.  We shall show that we can achieve $a_{ij}=0$ by replacing $s_i$ $(1\le i<n)$ by $s_n^{b_i}s_i$ with suitable $b_i\in\bbz$.

First we look at the following (last) three relations among {$s_{n-2},s_{n-1},s_{n}$}:
\[
s_{n-2}s_{n-1}s_{n-2}^{-1}=s_n^{a}s_{n-1}^{\e}, \quad s_{n-2}s_ns_{n-2}^{-1}=s_n^{\e_{n-2}},\quad s_{n-1}s_ns_{n-1}^{-1}=s_n^{\e_{n-1}}
\]
where $a=a_{n-2\ n-1}$ and $\e=\e_{n-2\ n-1}$.  Since $X_n$ is an iterated $\bbr P^1$-bundle, one can assume $a=0$ by the same reason as the case $n=3$.

Now suppose that for some $k<n-2$, we have achieved $a_{ij}=0$ for all $i>k$; so we are under the situation of Lemmas~\ref{lemm:2} and~\ref{lemm:3}.  What we shall prove is that we can achieve $a_{ij}=0$ for all $i\ge k$.  Let $p>k$.  If $a_{kp}=0$, we have nothing to do; so we assume $a_{kp}\not=0$.  We distinguish two cases according to the value of $\e_p$.

The case where $\e_p=-1$.  We replace $s_k$ by $s_n^bs_k$.  This replacement does not affect relations for $s_i$ and $s_j$ with $k<i<j$, so it keeps $a_{ij}=0$ for $k<i<j$.  But the relation $s_ks_ps_k^{-1}=s_n^{a_{kp}}s_p^{\e_{kp}}$ turns into
\[
s_ks_ps_k^{-1}=s_n^{a_{kp}+2b}s_p^{\e_{kp}}
\]
because $\e_p=-1$.  Since $a_{kp}$ is even by Proposition~\ref{char of RP} (2), one can take $b=-a_{kp}/2$ so that the exponent of $s_n$ above becomes zero.  For other $q>k$ with $\e_q=-1$, $a_{kq}=a_{kp}$ by Lemma~\ref{lemm:3} (1).  Therefore, $b$ is independent of $p$ with $\e_p=-1$.

The case where $\e_p=1$.  We note $\e_j=1$ for any $j>k$ by Lemma~\ref{lemm:3} (2).  We replace $s_p$ by $s_n^{c}s_p$.  In this case, it is not obvious that this replacement keeps $a_{ij}=0$ for $k<i<j$ but it does.  In fact, its suffices to check that the relations $s_ps_js_p^{-1}=s_j^{\e_{pj}}$ for $p<j$ and $s_\ell s_ps_\ell^{-1}=s_p^{\e_{\ell p}}$ for $\ell <p$ remain unchanged and one can easily check that the former identity remains unchanged because $\e_j=1$ and the latter one remains unchanged because $\e_p=1$ by assumption and $\e_{\ell p}=\e_\ell$ by Lemma~\ref{lemm:3} (2).
However, the relation $s_ks_ps_k^{-1}=s_n^{a_{kp}}s_p^{\e_{kp}}$ turns into
\begin{equation} \label{6}
s_ks_ps_k^{-1}=s_n^{a_{kp}+(\e_k-\e_{kp})c}s_p^{\e_{kp}}.
\end{equation}
On the other hand, since $s_k^2s_p^2=s_p^2s_k^2$, we have $(\e_k+\e_{kp})(\e_p+1)a_{kp}=0$ by Lemma~\ref{lemm:1}.  Since $a_{kp}\not=0$ and $\e_p=1$, this implies $\e_k=-\e_{kp}$ and hence $\e_k-\e_{kp}=\pm 2$.  Since $a_{kp}$ is even by Proposition~\ref{char of RP} (2), one can take $c=-a_{kp}/(\e_k-\e_{kp})$ so that the exponent of $s_n$ becomes zero in \eqref{6}.

In any case, we can achieve $a_{kp}=0$ for any $p>k$ keeping $a_{ij}=0$ for $k<i<j$.  This completes the induction step and proves the theorem.
\end{proof}

\bigskip

\noindent
{\bf Acknowledgment.}  The authors would like to thank Y. Kamishima for his helpful comments on a preliminary note of the paper.  In particular, the proof of Lemma~\ref{Bieberbach} has been greatly simplified and we have learned that a similar study to our paper is done in \cite{N}.
Finally, the authors would like to thank the anonymous referee for helpful comments.

\end{document}